\documentclass[12pt]{amsart}
\usepackage{graphicx,amssymb}

\newcommand{\Lax}{\mathcal{L}}

\newcommand{\R}{\mathbb R}
\newcommand{\N}{\mathbb N}

\newtheorem{thm}{Theorem}
\newtheorem{lem}[thm]{Lemma}

\newtheorem{prop}[thm]{Proposition}
\newtheorem{princ}[thm]{Principle}

\theoremstyle{definition}

\theoremstyle{remark}

\begin{document}

\title[Dynamical Analysis of a repeated game]
{Dynamical Analysis of a repeated game with incomplete information}
\author{Xavier Bressaud}
\author{Anthony Quas}
\begin{abstract}
We study a two player repeated zero-sum game
with asymmetric information introduced by Renault
in which the underlying state of the 
game undergoes Markov evolution (parameterized by a transition
probability $\frac 12\le p\le 1$). H\"orner, Rosenberg, Solan and
Vieille identified an optimal strategy, $\sigma^*$ for the informed player 
for $p$ in the range $[\frac 12,\frac 23]$. We extend the range on which 
$\sigma^*$ is proved to be optimal to about $[\frac 12,0.719]$ 
and prove that it fails to be optimal at a value around 0.7328.
Our techniques make use of tools from dynamical systems,
specifically the notion of pressure, introduced by D. Ruelle.
\end{abstract}
\maketitle

We study a simple two player dynamic zero-sum game with asymmetric information 
introduced by Renault in \cite{Renault} 
and studied by H\"orner, Rosenberg, Solan and Vieille in \cite{HRSV}.
The system is in a state unknown to one of the players. Unlike the 
Aumann--Maschler model \cite{AumannMaschler}, the state here undergoes 
Markov evolution independent of the actions of the players.

At each stage, the system is in one of two states $S_0$ and $S_1$.
The two players, Ian and Una (for informed and uninformed respectively),
simultaneously make a choice of playing 0 or 1. If the symbols all coincide 
(that is the system is in state $S_0$ and both Ian and Una play 0; or
the system is in state $S_1$ and Ian and Una both play 1)
then Una gives Ian \$1. Otherwise no money is transferred. 

A crucial aspect of the game is that
Ian is aware of the state \emph{before} choosing his move, whereas
Una is never told of the state. Also, the money that Una pays Ian is
not paid immediately, but only after a large number of rounds of the
game have been played.
Each player sees the moves of the other, but is not informed of the 
payoff at the time (although Ian can deduce this information from what is known 
to him, whereas Una cannot).

The state of the system is assumed to undergo Markov evolution, 
where the system stays in its current state
between moves with fixed probability $p\ge \frac12$,
or switches with probability $1-p$. The transition probability governing
the switching is known to both players. We assume that the system is
initially in a random state with uniform probability. 

Ian thus faces a tradeoff between short term (he has sufficient information to
optimize his expected payoff in the current turn) versus long term (if he always plays
so as to optimize his payoff in the current turn, then he reveals the 
current state of the system
to Una, who can then use this information to minimize Ian's payoff).

The existence of a uniform value, its characterization and the existence of
optimal strategies for Una was obtained by Renault \cite{Renault}.
Neyman \cite{Neyman} extended these results to the case of partial monitoring
of the past moves, and established the existence of
optimal strategies for both players. 
That is, strategies $\sigma$
for Ian and $\tau$ for Una, such that whenever Una uses strategy $\tau$, 
Ian's long-term average expected payoff is at most $v$; whereas whenever Ian
uses strategy $\sigma$, his long-term average expected payoff is at least $v$.
Thus any strategy for Ian gives a lower bound for the value of 
the game (by taking the infimum of the expected long-term gain over all
possible counter-strategies by Una). Similarly any strategy for Una
gives an upper bound for the value of the game.

As usual in game theory, the best strategies are often mixed strategies. 
That is, given all of the information available to a player, his strategy
returns a probability vector distributing mass to the available moves.
Since we use dynamical systems theory, it is convenient to have
a compact space describing past moves that is mapped \emph{into itself}
when it is updated by recording a new move. We therefore use
the following spaces to describe the state prior to the current turn.
Let $M_I=\{0,1\}$, $M_U=\{0,1\}$ and $\mathcal S=\{S_0,S_1\}$
represent Ian's possible moves, Una's possible moves and the system's state...
A strategy for Ian can then be formally described as a map $\sigma$ from 
$\bigcup_{n\ge 0}(M_I\times M_U\times \mathcal S)^n$ to $[0,1]^2$, 
where the vector $\sigma(x,y,z)=(p_0,p_1)$
describes Ian's probabilities of playing 1 if the current state is $S_0$ or
$S_1$ respectively when Ian's past moves were $x$, Una's past moves were $y$ and
the sequence of past states is $z$.
Similarly, a strategy for Una is a map $\tau$ from
$\bigcup_{n\ge 0}(M_I\times M_U)^n$ to $[0,1]$, where $\tau(x,y)$ gives the probability of 
playing 1 if Ian's past moves were $x$ and Una's past moves were $y$.

Our goal, of course,
is essentially to find $v$ and the optimal strategies $\sigma$ and $\tau$. 
These, as one expects, depend significantly on $p$. The answer for $p=\frac12$ 
is straightforward: Ian always plays as if 
he were facing a one-shot game and wins with probability $\frac12$. The case
$p=1$ (so that the system always remains in the same state, which we assume
to be randomized uniformly)
was studied by Aumann and Maschler \cite{AumannMaschler}, where it shown that
he cannot use his information and has to play randomly as if he did 
not have any advantage (the \emph{non-revealing strategy})
and wins only with probability $\frac14$.  In \cite{HRSV}, 
the authors exhibit a strategy $\sigma^*$ for Ian (defined properly in 
Section \ref{sec:sigmastar}) and prove that it is optimal for all 
$\frac12 \leq p \leq \frac23$. In this setting, they give a simple closed 
formula, $v_p=\frac{p}{4p-1}$, for the value $v_p$ of the game 
and also provide an optimal strategy 
$\tau^*$ for Una (based on a two state automaton). 
They express the long-term payoff of the strategy $\sigma^*$ 
as the sum of a series for all 
values of the parameter, hence providing a lower bound for the value of the game 
(an alternative lower bound that is better in some regimes is given by the
trivial strategy with a bound of $\frac14$),
while an upper bound is given by the payoff 
of the strategy $\tau^*$. They compute this lower bound explicitly for specific 
values of the parameter $p$ larger than $\frac23$. In the very special case $p=p^*$ 
solving $9x^3-13x^2+6x - 1 = 0$ ($p^* \simeq 0.7589$), they observe that  
$\sigma^*$ is still optimal.  In this case they also exhibit an optimal 
strategy for Una (more tricky but still based on a finite automaton). 
Finally, they raise the question of the optimality of $\sigma^*$ for instance at 
$p=\frac34$. We provide a negative answer and prove: 
\begin{thm}
\label{th:result}
The strategy $\sigma^*$ is optimal for $p<0.719$
and not optimal for some  $p<0.733$.  
\end{thm}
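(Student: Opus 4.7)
\medskip

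\textbf{Proof plan.}

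The statement has two halves that require different tools. The lower bound coming from $\sigma^*$ is available for all $p$ via the series computation of \cite{HRSV}, so what I actually need is, for the first half, a matching \emph{upper} bound (a good strategy $\tau$ for Player 2) valid on the whole interval $[\tfrac12, 0.719]$, and for the second half, a \emph{better} strategy $\sigma'$ for Player 1 at some $p<0.733$.

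The first step is to write down, once and for all, a clean closed form (or tractable generating-function expression) for $v(\sigma^*)(p)$ by redoing the series computation of \cite{HRSV}; this is the benchmark both halves have to hit or beat. Because $\sigma^*$ is a stationary, Markovian strategy driven by a finite piece of recent history, $v(\sigma^*)(p)$ can be written as the invariant-measure average of an explicit function on a finite or countable Markov chain, and one gets $v(\sigma^*)(p)$ as a rational (or at worst algebraic) function of $p$.

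For the optimality half ($p<0.719$), the plan is to extend the two-state automaton strategy $\tau^*$ of \cite{HRSV} (which works only up to $p=\tfrac23$) to a richer finite-automaton strategy $\tau_p$ for Player 2, with more memory states encoding longer patterns of Player 1's past moves and mixing probabilities tuned as explicit algebraic functions of $p$. The automaton will be constructed by examining the equalizing conditions that $\tau_p$ must satisfy against $\sigma^*$: at each memory state the conditional expected payoff against a T-move and against a B-move must be equal (otherwise $\sigma^*$ would not be a best reply), and the stationary distribution of the induced chain, combined with this equalization, should give $v(\tau_p)=v(\sigma^*)(p)$. Concretely I would start from the three-state (or small-k-state) extension suggested by the $p=p^*$ automaton of \cite{HRSV} and solve the linear indifference/flow equations symbolically in $p$; the resulting strategy is legitimate (lies in $[0,1]$) precisely on an interval ending at some $p_0$, and I would verify numerically/algebraically that $p_0 \ge 0.719$.

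For the non-optimality half, I would exhibit an explicit modification $\sigma'$ of $\sigma^*$ and compute its value. A natural candidate is to take $\sigma^*$ but alter its behaviour on a specific finite configuration of the past where $\sigma^*$ is known to ``waste information'' (typically by re-using the revealing move too aggressively); one introduces a small additional mixing probability $\alpha$ and optimizes over $\alpha$. Because $\sigma'$ is again a stationary finite-memory strategy, its worst-case value against Player 2's optimal reply is computable by solving a finite LP parameterized by $p$ and $\alpha$. It then suffices to exhibit a single $p_1<0.733$ and a single $\alpha$ for which $v(\sigma')(p_1)>v(\sigma^*)(p_1)$; this can be done symbolically and then certified numerically to whatever precision the threshold $0.733$ requires.

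The main obstacle is the first half: guessing the correct finite automaton for $\tau_p$ is genuinely constructive work, because the number of memory states needed grows as $p$ increases past $\tfrac23$, and one must simultaneously (i) certify that the indifference equations have a solution with all probabilities in $[0,1]$ on the full interval $[\tfrac12,0.719]$ and (ii) verify that Player 2 has no profitable deviation inside the automaton — the latter reducing to checking a finite list of inequalities in $p$. Getting the right automaton so that the valid interval reaches at least $0.719$, and showing the strategy $\sigma'$ beats $\sigma^*$ before $0.733$, together pin down the numerical window quoted in the theorem.
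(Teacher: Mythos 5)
Your plan has the right architecture (mutual best response for the upper bound, an explicit perturbation for the lower bound), but it is missing the two ideas that actually carry the paper's proof, and at the points where they are missing the plan would not go through. For the optimality half, you propose a richer \emph{finite} automaton for Player 2 with indifference equations reducing to ``a finite list of inequalities in $p$''. For generic $p>\frac23$ the orbit of $p$ under the belief-update map $\Phi$ is infinite, so Player 2's belief takes countably many values and the automaton the paper constructs is genuinely countably infinite; the probabilities $x(\theta)$ are obtained by solving the MDP optimality equations for Player 1 via an infinite matrix recursion, and the constraints $x(\theta)\in[0,1]$ together with the no-deviation inequalities form an \emph{infinite} family indexed by the orbit. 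The entire technical content of the paper is the reduction of this infinite family to a finite check: one shows the relative-score function $G$ is monotone, which is equivalent to a negative-pressure condition for the potential $\log(\gamma/\max(\theta,1-\theta))$ under $\Phi$, and that condition is certified by bounding the spectral radius of explicit finite transition matrices built from a partition of $[\frac12,p]$ by early orbit points. Nothing in your plan supplies this reduction, and without it step (i)/(ii) of your outline is an open-ended infinite verification.

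For the non-optimality half, your proposal to compute the worst-case value of the modified strategy ``by solving a finite LP'' hides the same difficulty: against a strategy of Player 1 that is not equalizing, Player 2's optimal reply is itself an MDP on a countable state space, not a finite LP. The paper avoids this by designing the perturbation $\sigma_{k_0,\epsilon}$ (a tilt by $\epsilon$ at a single belief value $\tilde\theta=\Phi^{k_0}(1-p)$ chosen close to $\frac12$) so that it \emph{retains} the equalizing property of $\sigma^*$; its value is then independent of Player 2's play and computable exactly as the invariant measure of a countable renewal-type chain, yielding the closed criterion involving $w(\theta)=\sum_k\prod_{j<k}\max(\Phi^j\theta,1-\Phi^j\theta)$ that is checked numerically at $p=\frac34$ and $p=0.73275\ldots$. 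You would need both to identify \emph{where} to perturb (near the critical point of $\Phi$, deep in the orbit, e.g.\ $k_0=57$ for the second value) and to impose the equalization constraint on the perturbation itself; as written, your candidate $\sigma'$ has no computable value.
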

\begin{figure}
\label{fig:parameter}
\includegraphics[width=10cm]{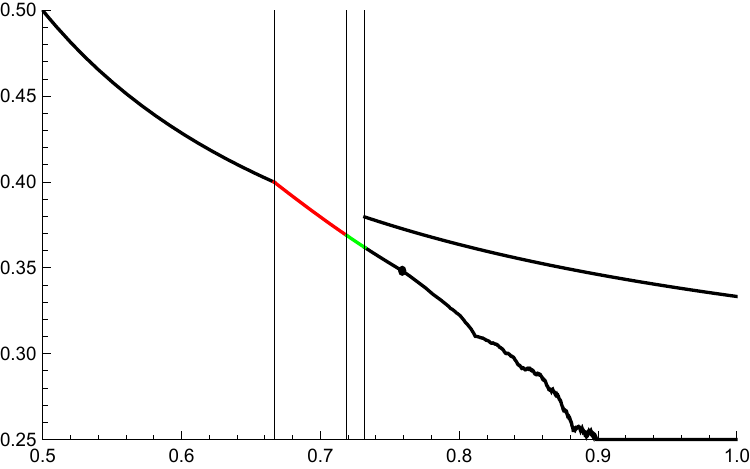}
\caption{Bounds on the value of the game as a function of the parameter 
$p$: The lower curve is the long-term payoff of $\sigma^*$. \cite{HRSV}
proved this was the value of the game in the range $[\frac 12,\frac 23]$. 
We prove this remains true up to 0.719 (grey), and give evidence that this hold
up to 0.732 (light grey). We show $\sigma^*$ is not optimal at $p=0.73275300915$.
Beyond 0.732, the upper bound for the value (top line) of the game was 
obtained in \cite{HRSV} based on a simple strategy for Una. They also found 
a particular value  $p\approx 0.7589$ for which $\sigma^*$ is optimal.}

\end{figure}
Defining $p_c=\sup\{t\colon \sigma^*\text{ is optimal for all 
$p\in[\frac 12,t]$}\}$, the theorem states that $0.719\le p_c<0.733$. 
For both the upper and lower bounds, the
proofs are based on checking that a certain finite set of inequalities is satisfied. 

The fact that $p_c\ge \frac 23$ was established in \cite{HRSV}. 
Experimentation strongly suggests that $p_c>0.732$, but we have not been able to 
show this rigorously. The methods in this article give, for each $n$, a family
($C_n$) of finitely checkable inequalities, such that if $p$ satisfies 
($C_n$) for some $n$, then $\sigma^*$ is optimal for $p$. 
The proof that $\sigma^*$ is optimal up to
0.719 proceeds by considering two intervals of parameters
and showing that on both intervals, ($C_9$) is satisfied
for all parameters in the interval. Further, if one picks values of
$p$ randomly in the range $[0.719,0.732]$ and then tests ($C_n$) for $n=50$, $n=100$,
$\ldots$, $n=500$, an experiment showed that for each of 10000
randomly selected $p$ values,
at least one of the collections of sufficient conditions for optimality of
$\sigma^*$ was satisfied. It seems likely that for any $p_0<p_c$, there is an $n$
such that $C(n)$ is satisfied by all $p\in[\frac23,p_0]$.
Unsurprisingly the first value of $n$ for which the collection of inequalities is
satisfied becomes larger as $p$ approaches the conjectured $p_c\in(0.732,0.733)$
and at the same time, the number of intervals of $p$ into which the range must
be sub-divided
is expected to grow exponentially with $n$. We are confident that one can 
go beyond $p=0.719$, but continuation requires an increasing amount of effort for
a decreasing amount of improvement.

We conjecture that $p_c$ is sharp in the sense that for $p>p_c$, 
$\sigma^*$ would in general not be optimal.  ``In general'', because 
as already pointed out, \cite{HRSV} shows there are still 
special values beyond $p_c$ at which $\sigma^*$ is optimal.
Quite surprisingly, we had to introduce tools 
from dynamical systems (thermodynamic formalism) to show the optimality of 
$\sigma^*$. The strategy of Una to which $\sigma^*$ is the optimal 
response turns out to be a strategy that takes into account
the past moves of Ian since the last `reset' (time at which Ian's 
move made it possible to deduce the current state with certainty).
Since the time since the last reset may be unbounded, we control the 
behaviour of the orbit of a certain dynamical system. However, the 
result relies, above all, on standard tools of game theory. 

The paper is laid out as follows: in Section \ref{sec:tools}, we 
introduce classical tools from game theory. In Section \ref{sec:sigmastar}, 
we define the strategy $\sigma^*$, prove its basic properties and 
compute its payoff for all $p$. In Section \ref{sec:player2}, we search 
for an optimal strategy for Una.
We give a system of equations whose solutions yield potential strategies for 
Una in the range $p\le 0.78$. Such a solution yields a desired strategy
only if it satisfies a set of inequalities. In Section 
\ref{sec:monotonicity}, we find a sufficient condition for 
these inequalities to hold in terms of the pressure of a potential. 
We show, in Section \ref{sec:pressure}, that the pressure condition is 
satisfied for all $p$ less than 0.719023. This ends the proof of the 
first part of Theorem \ref{th:result}. In Section \ref{sec:better} we 
exhibit a strategy for Ian with a larger long-term expected payoff than 
$\sigma^*$ 
for certain values of $p$; the smallest such value of $p$ that we found 
is smaller than 0.733. This  will finish the proof of Theorem \ref{th:result}. 
A final section addresses the question of which features of the game
make it amenable to an analysis of this type. 

\section{Tools from game theory}
\label{sec:tools}
The technical framework that we use to prove these statements is the study of 
Markov Decision Processes (MDP). A Markov decision process is one in 
which the system moves
around a compact state space $\Omega$, influenced by an agent who
can, at each step, choose from one of a compact (in our case, finite) set of 
transition probabilities on the state space, each one with a given one-step payoff. 
The value of the process is the maximal long-term expected value of the gain.

More formally, given a repeated game, we let $\gamma_N(\sigma,\tau)$ be the expected 
payoff per round to Player 1 if Player 1 plays the strategy $\sigma$ and 
Player 2 plays the strategy $\tau$ for $N$ rounds. 
Suppose there exists a $v\in\R$ such that 
for each $\epsilon>0$, there exists $N_0\in\N$ and a pair of strategies
$\sigma^*$ and $\tau^*$ for Players 1 and 2 respectively such that
for all $N\ge N_0$, 
\begin{align*}
&\gamma_N(\sigma^*,\tau)>v-\epsilon\text{ for each strategy $\tau$ for Player 2;}\\
&\gamma_N(\sigma,\tau^*)<v+\epsilon\text{ for each strategy $\sigma$ for Player 1.}
\end{align*}
Then $v$ is the \emph{value} of the game. 

If a game has value $v$ and there exists a strategy $\sigma^*$ such that
$\liminf_{N\to\infty} \gamma_N(\sigma^*,\tau)\ge v$ for each strategy $\tau$
for Player 2, then $\sigma^*$ is said to be an \emph{optimal strategy} for 
Player 1. Similarly if $\tau^*$ is such that $\limsup_{N\to\infty}\gamma_N
(\sigma,\tau^*)\le v$ for each strategy $\sigma$ for Player 1, then
$\tau^*$ is optimal for Player 2. 

We use the following theorem to characterize the value of a game
and optimal strategies
\begin{thm}[Average Cost Optimality Equality \cite{FeinbergShwartz}]
\label{thm:ACOE}
Suppose a Markov decision process has compact state space $\Omega$,
a compact action set $\mathcal A$, a continuous payoff function $r\colon 
\Omega\times\mathcal A\to\R$ and a continuous transition rule $q\colon \Omega\times
\mathcal A\to \mathcal P(\Omega)$ such that $q_{\omega,a}$ is a finitely supported
probability measure on $\Omega$ for each $\omega\in\Omega$ and $a\in\mathcal A$.

Suppose there exist $v\in\R$ 
and a bounded function 
$V\colon\Omega\to\R$ such that the following equation is satisfied:
\begin{equation}
V(\omega)+v = \max_{a \in\mathcal A}\left(r(\omega,a)+\int V(\omega')
\,dq_{\omega,a}(\omega')\right).
\label{eq:MDP}
\end{equation}

Then $v$ is the value of the Markov decision process for each initial state $\omega$. 
Further, a stationary strategy $\alpha\colon\Omega\mapsto\mathcal A$ is optimal
if $\alpha(\omega)$ attains the maximum in the right side of \eqref{eq:MDP}
for each $\omega\in\Omega$.
\end{thm}


We interpret $V(\omega)$ as the relative score of the position $\omega$. This is there
in order to take long-term effects into account. This can be thought of as
answering the question \emph{What is the long-term total 
difference between starting at some fixed $\omega_0$ and starting at $\omega$?}
This will be finite under suitable continuity and contractivity
assumptions. The equation \eqref{eq:MDP} 
informally says that if one chooses the action $a$ 
achieving the maximum, then the expected gain plus difference in $V$ values is $v$. 

The way we use Theorem \ref{thm:ACOE} is as follows. Suppose (for example) Una is
looking for a best response to a strategy $\sigma$ for Ian that is based upon the current
state of the system as well as Una's current \emph{belief} that the system is in state 1 
(that is the conditional probability that the system is in state 1 given the information 
available to her).
We let the state space be $\Omega=[0,1]$, the space of beliefs.
Una's belief is initially $\frac 12$ and is updated after each move.

Let us suppose that $v\in\R$ and $V\colon \Omega\to\R$ satisfy \eqref{eq:MDP}.
Una is then trying to decide between playing 0 and 1.
Since she knows $\omega$, she has computed the probability that the system is
in state $S_0$ or $S_1$, and can also compute the probability that Ian will play 0 or 1. 
Hence she can compute the expected one-round payoff to Ian
if she plays either 0 or 1.
An best response (there may be many) to $\sigma$ is
any strategy that always picks an option attaining the minimum expectation
of (payoff + $V$).

We now turn to another frequently used idea in zero-sum games:
\begin{princ}\label{princ:bestbest}
Suppose that 
\begin{enumerate}
\item $\tau$ is a best response to $\sigma$; and
\item $\sigma$ is a best response to $\tau$
\end{enumerate}
Then $\sigma$ is an optimal strategy for Ian.
Similarly $\tau$ is an optimal strategy for Una.
\end{princ}
See for example \cite{HRSV}.
We exploit this principle repeatedly in the remainder of this article.

A symmetry argument explained in \cite{HRSV} shows that for 
$0\leq p \leq \frac12$, $v_p = v_{1-p}$. Hence, in what follows
we consider the case $\frac12\le p\le 1$.  We will be looking
mainly at the strategy $\sigma^*$ introduced in \cite{HRSV}.

In what follows, if Ian is assumed to be playing using the strategy $\sigma^*$
(to be defined below), we frequently refer to Una's \emph{belief} that the system
is in state $S_1$. Formally, this is just the 
conditional probability that the system is in the state $S_1$ given all 
the information
available to Una (that is the sequence of past moves made by both players),
given that Ian is using $\sigma^*$. 
Of course, Ian can calculate Una's belief that the system is in state $S_1$. 

\section{The strategy $\sigma^*$}
\label{sec:sigmastar}

We now describe a strategy, $\sigma^*$, that we show to be optimal for
Ian for a range of the parameter. This strategy was initially introduced
in \cite{HRSV}. As pointed out below, it is characterized by being a
greedy U-indifferent strategy. 

We define two maps as follows: 
\begin{align*}
f_0(\theta)&=\begin{cases}
p\frac{2\theta-1}\theta+(1-p)\frac{1-\theta}{\theta}&\text{if $\theta\ge\tfrac12$};\\
1-p&\text{if $\theta\le\tfrac12$}.
\end{cases}\\
f_1(\theta)&=\begin{cases}
p&\text{if $\theta\ge \tfrac12$};\\
p\frac{\theta}{1-\theta}+(1-p)\frac{1-2\theta}{1-\theta}&\text{if $\theta\le\tfrac12$}.
\end{cases}
\end{align*}
Notice that $f_0(\theta)=1-f_1(1-\theta)$. We define a function $\Phi$ by setting 
$\Phi(x)$ to be $f_0(x)$ if $x\ge \frac12$ and $f_1(x)$ otherwise. We set 
$p_n = \Phi^n(p)$ for all $n \geq 0$. 

\begin{figure}
\includegraphics[width=6cm]{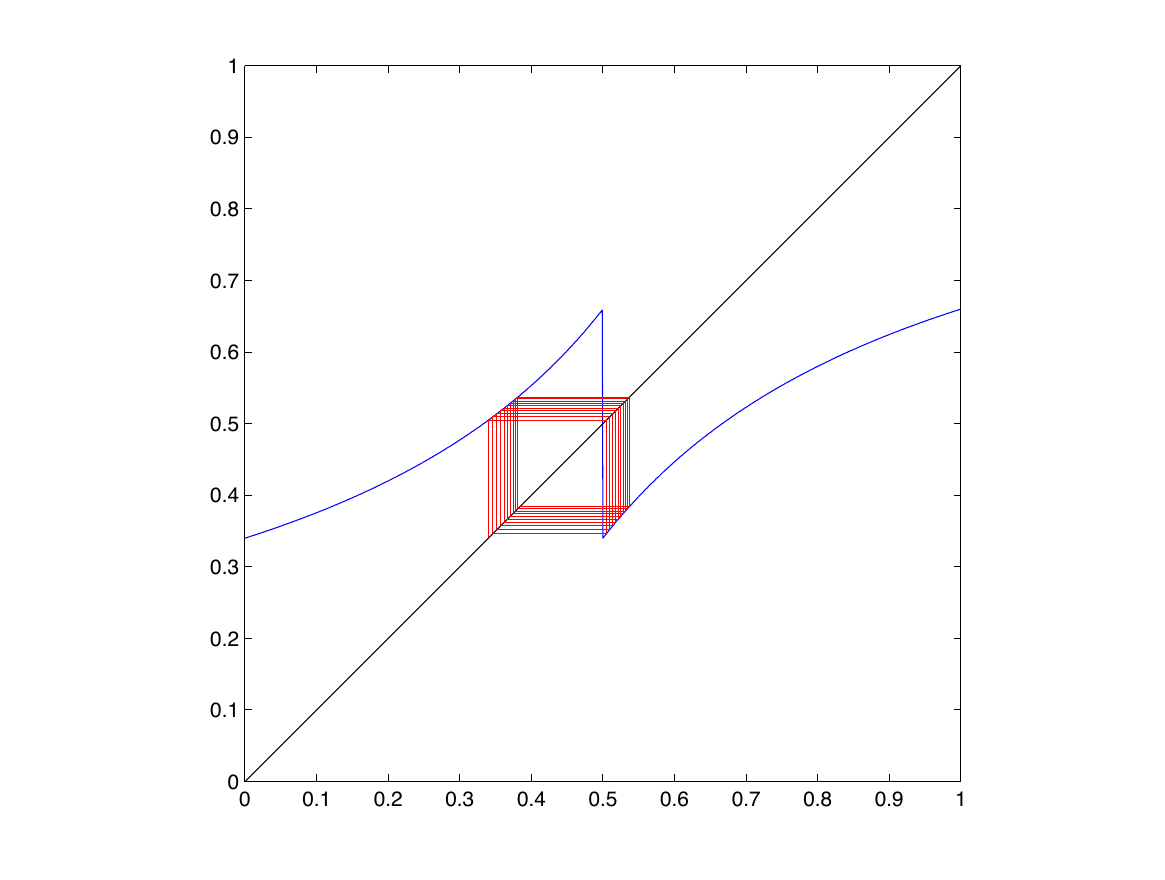}
\includegraphics[width=6cm]{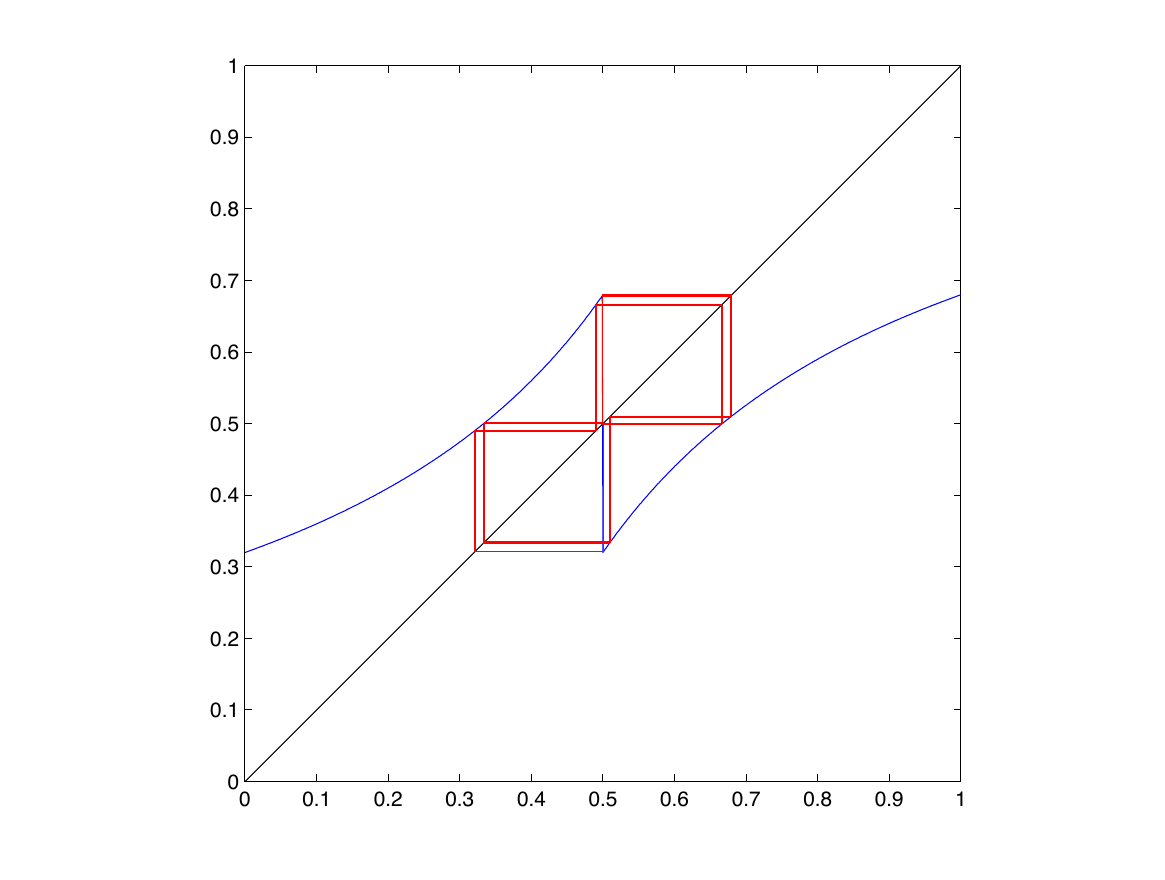}
\includegraphics[width=6cm]{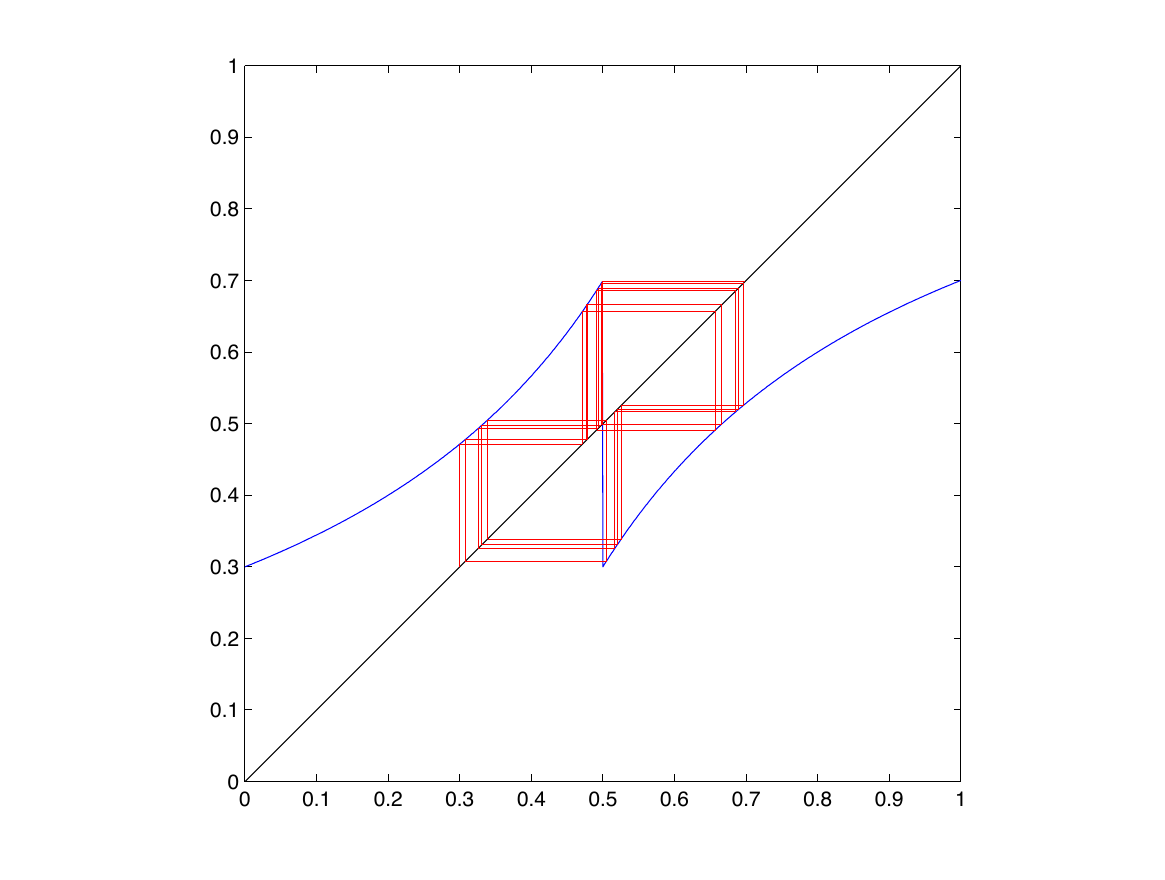}
\includegraphics[width=6cm]{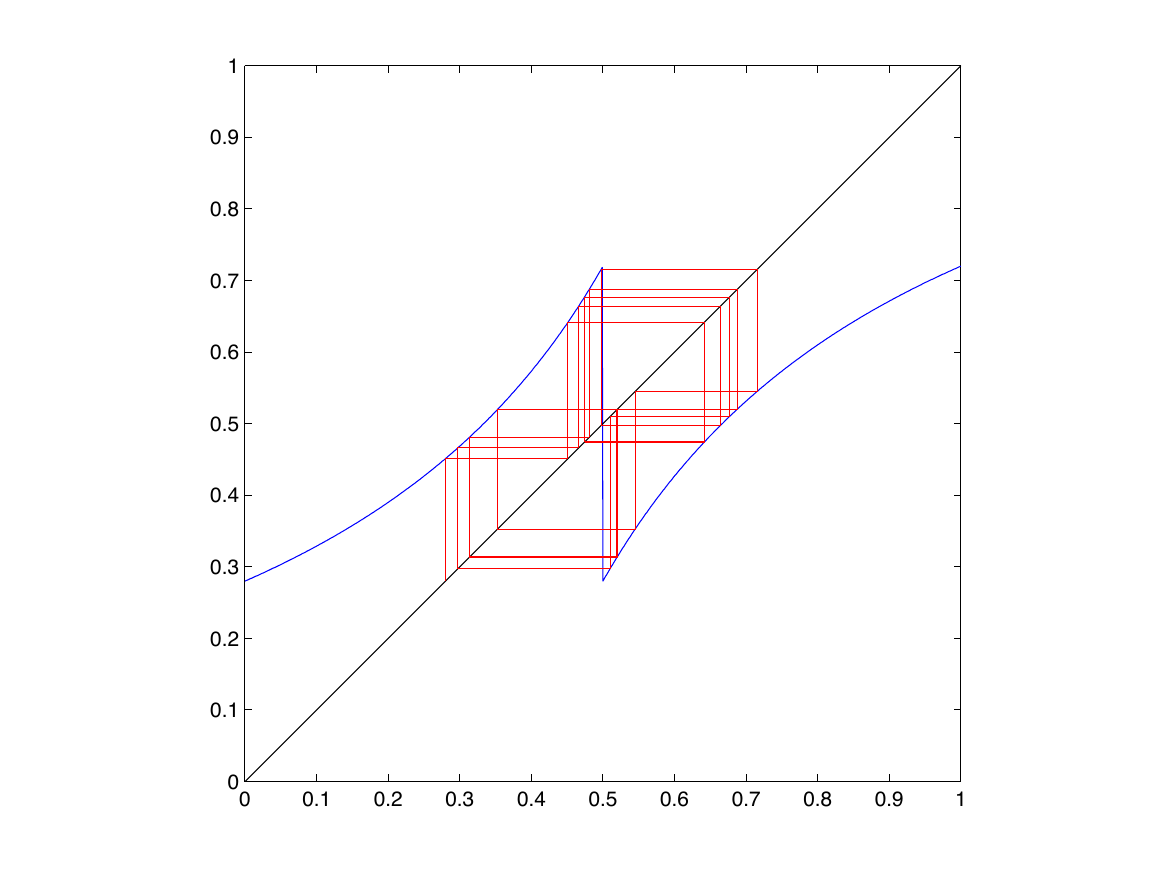}
\caption{The graphs of $\theta \mapsto \Phi(\theta)$ and first points of 
the orbit of $1-p$, for values of $p$ ranging from $p=0.66$ to $p=0.72$ 
in steps of $0.02$.} 
\end{figure}

The strategy $\sigma^*$ is then defined as follows.
Ian computes Una's belief, $\theta$, that the system is in state 1.
He then plays 1 with the following probabilities:
\begin{equation*}
\mathbb P(\text{playing 1})=\begin{cases}
1&\text{if the system is in state $S_1$ and $\theta\le \tfrac12$};\\
\frac{1-2\theta}{1-\theta}&\text{if the system is in state $S_0$ and 
$\theta\le\tfrac12$};\\
\frac{1-\theta}{\theta}&\text{if the system is in state $S_1$ 
and $\theta\ge\tfrac12$};\\
0&\text{if the system is in state $S_0$ and $\theta\ge\tfrac12$.}
\end{cases}
\end{equation*}
He plays 0 with the complementary probability. 

As shown in \cite{HRSV}, the maps $f_0$ and $f_1$ keep track of Una's belief that
the system is in state $S_1$ if Una knows that Ian is playing $\sigma^*$ by Bayesian
updating. For example if Una's belief that the system is in state $S_1$ 
is $\theta>\frac 12$
then Una attaches probabilities $\theta\cdot \frac{1-\theta}{\theta}=1-\theta$
to $(S_1,1)$, $\theta\cdot \frac{2\theta-1}{\theta}=2\theta-1$ to $(S_1,0)$
and $1-\theta$ to $(S_0,0)$,
where $(S_i,j)$ means the event that the system is in state $S_i$ and Ian plays $j$.
If Ian plays 0, Una computes the probabilities of the system having been in $S_1$
to be $(2\theta-1)/((2\theta-1)+(1-\theta))=(2\theta-1)/\theta$, so that
her updated belief that the system is in state $S_1$ is
$\frac{2\theta-1}\theta p + \frac{1-\theta}{\theta}(1-p)=f_0(\theta)$. 

The critical feature of $\sigma^*$ that we make use of is the fact that
the expected long-term average gain for Ian if he plays $\sigma^*$ is the same 
no matter which strategy is used by Una. We prove this in the lemma below.
In view of this lemma and Principle \ref{princ:bestbest}, 
if one can find a strategy $\tau$ for Una, to which $\sigma^*$
is a best response, then $\sigma^*$ and $\tau$ are optimal strategies
for Ian and Una respectively.

\begin{lem}
\label{lem:payoff}
The expected long-term average gain for Ian when playing strategy $\sigma^*$ 
is independent of the strategy played by Una. Hence any strategy $\tau$ for Una
is a best response to $\sigma^*$.
\end{lem}

\begin{proof}
We consider the Markov decision process for Una. The state of the process will be
just her belief, $\theta$, that the system is in the state $S_1$. 
Her action has no effect on the evolution of the state, and so her chosen move will
just be the one with the lower expected one-stage payoff. 

Suppose without loss of generality that $\theta\ge \frac12$. 
Then if Una plays 0, then Ian gains if the system was in state $S_0$ (if
$\theta\ge\frac12$ then Ian always plays 0 if the system is in state $S_0$). The
expected one-step gain for Ian from this strategy is therefore $1-\theta$.
Similarly, if Una plays 1, then Ian gains if the system was in state $S_1$
and Ian chose to play 1. This happens with probability $\theta\times (1-\theta)/
\theta=1-\theta$. 

Similarly, if $\theta<\frac12$, the expected one-step gain for Ian is $\theta$, 
independently of any move played by Una.

Hence the expected one-step gain from any position does not depend on Una's move.
The next position attained by the system is also independent of Una's move.
So the long-term average gain is also independent of Una's choice of moves and
Ian's long-term average gain is independent of Una's strategy.
\end{proof}

We call a strategy for Ian with the property in the lemma above \emph{U-indifferent}.
A strategy is U-indifferent if the probabilities (given Una's information) that
the system is in state $S_1$ and Ian plays 1 and that the system is in state $S_0$ and 
Ian plays 0 are equal. This probability is then the expected one-step
gain for Ian. 
In fact, $\sigma^*$ is the greedy U-indifferent strategy: the expected 
one-step gain is $\min(\theta,1-\theta)$ as shown above.
On the other hand, if Ian is playing any strategy and Una's belief that the 
system is in state $S_1$ is $\theta$, then the minimum of the probabilities
that the system is in state $S_1$ and Ian plays 1 and that the system is in
state $S_0$ and Ian plays 0 is at most $\min(\theta,1-\theta)$. Hence
Una can ensure that Ian's expected one-step gain  is at most $\min(\theta,1-\theta)$. 
This quantity is maximized by $\sigma^*$.

Consider the evolution of Una's beliefs. In all stages after the first, 
these
belong to the set $\bigcup_{n\ge 0}\Phi^n\{p,1-p\}$. 
Notice that the values of $f_0(x)$ and $f_1(x)$ depend on $p$, but
we suppress the dependence on $p$ from the notation since $p$ is fixed.
Since for $x\ge \frac12$, we have $f_0(1-x)=1-f_1(x)$, we have
$\Phi^n(1-p)=1-\Phi^n(p)$ for all $n$. 

When $\theta\ge\frac12$, the belief returns to $p$ when the system is in state 
$S_1$ and Ian plays $1$. If $\theta>\frac 12$ and the system is in state
$S_0$ (i.e. there is a mismatch between Una's belief and the state of the system),
Ian never selects 1. When $\theta\le\frac12$, the belief returns to $1-p$ 
when the system is in state $S_0$ and Ian selects 0. 

We view this as a ladder (see Figure \ref{fig:ladder}) with base $\{p , 1-p\}$ and rungs 
$\{p_n, 1-p_n\}$, for $n\geq 1$,
on which the belief follows a Markov chain: at each step, one either ascends 
one level, or
falls down to the base. Falling off corresponds to making the choice that 
returns the state to $p$ or $1-p$. 

\begin{lem}
If Ian plays strategy $\sigma^*$, then his long-term expected gain is equal to 
the proportion of time spent at the base of the ladder, 
irrespective of the strategy played by Una. 

We can therefore deduce an explicit lower bound (in the form of an infinite sum) 
for the value of the game as a function of the parameter $p$.
\end{lem}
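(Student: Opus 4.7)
The plan is to interpret $\theta$ as a Markov chain on the ladder (whose transition kernel, as observed in the proof of Lemma~\ref{lem:payoff}, does not depend on Player 2's choices) and to exploit the coincidence that at any $\theta$, the one-step expected gain equals the one-step probability of returning to the base.

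From Lemma~\ref{lem:payoff} the expected one-step gain at belief $\theta$ is $g(\theta)=\min(\theta,1-\theta)$. From a case analysis of Player 1's moves under $\sigma^*$ (already carried out in the proofs of the two preceding lemmas), the one-step transition of the belief chain goes either to $\Phi(\theta)$ or to the base $B=\{p,1-p\}$, with the probability of the latter equal to
\begin{equation*}
q(\theta)\;=\;\begin{cases}
1-\theta&\text{if }\theta\ge\tfrac12,\\
\theta&\text{if }\theta\le\tfrac12,
\end{cases}
\end{equation*}
which is exactly $\min(\theta,1-\theta)=g(\theta)$. Moreover $\Phi$ maps $[0,1]$ into $[1-p,p]$, so every ladder value $p_n$ lies in $[1-p,p]$ and the climb probabilities $\max(p_n,1-p_n)$ are bounded above by $p<1$; excursions from the base thus have geometric tails, and the chain admits a unique stationary distribution $\pi$.

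Applying the ergodic theorem I express Player 1's long-term expected gain as $\int g\,d\pi$, while the stationarity identity $\pi=\pi P$ applied to the indicator of $B$ gives $\pi(B)=\int q\,d\pi$. Since $g\equiv q$ on the state space, the two quantities coincide, establishing the main assertion. For the promised series expression, I fold the chain under the involution $\theta\leftrightarrow 1-\theta$ (a symmetry of both the kernel and the partition into rungs) to obtain a birth-and-base-return chain on $\{0,1,2,\ldots\}$ whose climb probability from rung $n$ is $\max(p_n,1-p_n)$; a direct computation of its stationary mass at the base then yields
\begin{equation*}
v_p\;\ge\;\pi(B)\;=\;\bigg(\sum_{n\ge 0}\prod_{k=0}^{n-1}\max(p_k,1-p_k)\bigg)^{-1}.
\end{equation*}
The argument is essentially bookkeeping combining facts already in hand; the only mildly technical point is positive recurrence of the chain, which is immediate from the uniform bound $\max(p_n,1-p_n)\le p$. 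If one prefers to avoid invoking the ergodic theorem altogether, one can instead apply the converse MDP principle from Section~\ref{sec:tools} with a bounded potential $V$ whose existence and boundedness follow from the same geometric contraction on the ladder.
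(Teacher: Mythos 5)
Your proposal is correct and follows essentially the same route as the paper: both identify the belief process as a Markov chain on the ladder, observe that the one-step expected gain $\min(\theta,1-\theta)$ coincides with the one-step probability of falling to the base, and then read off the answer from the stationary distribution, arriving at the same series $v=\bigl(\sum_{n\ge0}\prod_{k=0}^{n-1}\max(p_k,1-p_k)\bigr)^{-1}$. Your phrasing via the balance identity $\pi(B)=\int q\,d\pi$ and the explicit positive-recurrence check (geometric tails from $\max(p_n,1-p_n)\le p<1$) is just a slightly more formal rendering of the paper's appeal to the renewal structure of the chain.
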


\begin{proof}
Consider the evolution of Una's beliefs. These
always belong to the set $\bigcup_{n\ge 0}\Phi^n\{p,1-p\}$. 

Recall from Lemma \ref{lem:payoff} that  if Una's belief is $\theta$,  
the one-step expected payoff for Ian is given by $\min(\theta,1-\theta)$ 
independently of the strategy played by Una. 

On the other hand, the probability of returning to $p$ or $1-p$ from
$\theta$ or $1-\theta$  is also $\min(\theta,1-\theta)$. We verify this in the 
case $\theta\ge\frac12$. The belief returns to $p$ only if the system is in state 
$S_1$ and Ian selects $1$. The probability of this is 
$1-\theta=\min(\theta,1-\theta)$ as required.

Hence from the $n$th rung of the ladder, the probability of falling off  
is $\min(\Phi^n(p),1-\Phi^n(p))$. This is the same as the expected payoff 
from that state. That is, in any position, the expected payoff from the next turn 
is equal to the probability of falling off the ladder at the next turn.
We let $u_n=\max(\Phi^n(p),1-\Phi^n(p))$ be the complementary probability:
the probability of continuing up the ladder from the $n$th stage.

\begin{figure}
\includegraphics{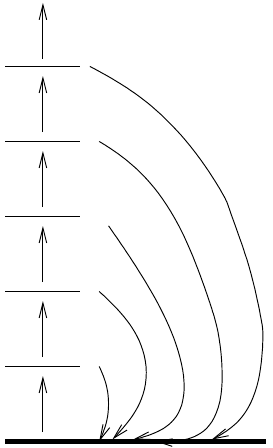}
\caption{Una's belief that the system is in state $S_1$
can be modeled by a ladder: if Ian plays 0 while $\theta<\frac12$;
or 1 while $\theta>\frac12$, then the belief becomes $1-p$ or $p$
respectively, corresponding to the bottom rung of the ladder. Note that the $n$th
rung of the ladder corresponds both to $\Phi^n(p)$ and $\Phi^n(1-p)$}
\label{fig:ladder}
\end{figure}

One can check that for this Markov chain, the stationary distribution
gives level $n$ probability
$$
\pi_n=\frac{u_0\ldots u_{n-1}}
{1+u_0+u_0u_1+u_0u_1u_2+\ldots}.
$$
We do not specify any initial measure, but the renewal structure 
of the chain shows that on the long term the gain is described by 
the invariant measure, independently of the initial conditions: 
after a random but finite amount of time, Ian will play so 
that $\theta$ becomes $p$ (or $1-p$). 

Since in any state, the expected gain is the same as the probability of 
`falling off the ladder', we see that the expected gain per round for Ian if 
he plays $\sigma^*$ is 
given by
$$
Y=\frac{1}{1+u_0+u_0u_1+u_0u_1u_2+\ldots},
$$
irrespective of Una's strategy,
where we recall that the quantities $(u_i)_{i\ge 0}$ are functions of $p$. 
We observe that this expression was already derived in \cite{HRSV}. 
\end{proof}

This $Y$ is a lower bound for the value of the game. We give an 
alternative expression for $Y$ in terms of a sum of matrix products. 
This is not strictly necessary for what follows, but it is here as
we think it will help the reader gain a better understanding.
This expression should be compared with the expression that arises later for $1/v$
($v$ being the value of the game in some ranges of $p$). 

We will write $p_n=\Phi^n(p)$ as a quotient of two polynomials in $p$: $p_n=
a_n/b_n$, so that $p_0=p/1$. Also write $\epsilon_n=1$ if 
$p_n\ge\frac12$ and 0 otherwise.

If $\epsilon_n=1$, we have $p_{n+1}=f_0(p_n)$, while if $\epsilon_n=0$, we have
$p_{n+1}=f_1(p_n)$.

If $\epsilon_n=1$, we have $u_n=p_n=a_n/b_n$ and 
\begin{align*}
\frac{a_{n+1}}{b_{n+1}}&=f_0(a_n/b_n)=
\frac{p(2a_n-b_n)/b_n+(1-p)(b_n-a_n)/b_n}{a_n/b_n}\\
&=\frac{a_n(3p-1)-b_n(2p-1)}{1a_n+0b_n}.
\end{align*}

Similarly if $\epsilon_n=0$, we have $u_n=1-p_n=(b_n-a_n)/b_n$ and
\begin{align*}
\frac{a_{n+1}}{b_{n+1}}&=f_1(a_n/b_n)=
\frac{pa_n/b_n+(1-p)(b_n-2a_n)/b_n}{(b_n-a_n)/b_n}\\
&=\frac{(3p-2)a_n+(1-p)b_n}{-a_n+b_n}.
\end{align*}

In both cases, we see that $u_n=b_{n+1}/b_n$.
Introducing matrices $U_1=\begin{pmatrix}3p-1&-(2p-1)\\1&0\end{pmatrix}$ and 
$U_0=\begin{pmatrix}3p-2&1-p\\-1&1\end{pmatrix}$, we have
\begin{equation*}
\begin{pmatrix}a_{n+1}\\b_{n+1}\end{pmatrix}
=U_{\epsilon_n}
\begin{pmatrix}a_{n}\\b_{n}\end{pmatrix}.
\end{equation*}

Now, taking the product of the $u_n$'s, we obtain \emph{par
t\'el\'escopage} $u_0\cdots u_n=b_{n+1}/b_0=b_{n+1}$. Hence we get the expression
\begin{equation*}
u_0u_1\cdots u_n=b_{n+1}=\begin{pmatrix}0&1\end{pmatrix}
U_{\epsilon_n}\ldots U_{\epsilon_0}
\begin{pmatrix}p\\1\end{pmatrix}.
\end{equation*}

Summing over $n$, we obtain another expression for 
the average long-term gain that will accrue to Ian if he plays $\sigma^*$.

\begin{equation}\label{eq:vval1}
\frac1Y=\begin{pmatrix}0&1\end{pmatrix}
(I+U_{\epsilon_0}+U_{\epsilon_1}U_{\epsilon_0}+
U_{\epsilon_2}U_{\epsilon_1}U_{\epsilon_0}+\ldots)\begin{pmatrix}p\\1\end{pmatrix}
\end{equation}

\section{Strategies for Una}
\label{sec:player2}
In \cite{HRSV}, the authors showed that $\sigma^*$ is optimal for
$p\in [\frac12,\frac23]$ and for a specific $p^*\approx 0.7589$
that is the unique value of $p$ for which $p_1>\frac 12$ and
$p_1=1-p_2$.
In both cases, they exhibit a strategy for Una based on a finite state automaton 
where transitions in the automaton are governed by actions of Ian and 
then show that $\sigma^*$ is a best response to this strategy.  For 
$p > \frac23$, we are going to proceed along the same lines, except that 
strategies for Una will be based on a countable state automaton rather 
than a finite one. The states of the automaton are labeled by Una's belief
that the system is in state 1 \emph{under the assumption that Ian is playing
$\sigma^*$}.
In this section, we identify strategies for Una that are candidates for 
this purpose. The proof that they have the correct property (that $\sigma^*$ 
is a best response to the strategies $\tau_p$ that we construct) is in 
the next two sections.

As follows from Lemma \ref{lem:payoff}, any strategy of Una is a
best response to $\sigma^*$. 

In the case $\frac 12\le p\le \frac 23$, one can check that the range of 
$f_0$ is in $[1-p,\frac12]$, while the range of $f_1$ is in $[\frac12,p]$. 
Thus if Ian is playing $\sigma^*$, his last move
is sufficient to determine whether Una believes that it is 
more likely that the system
is in state $S_1$ or $S_0$. The strategy $\tau^*$ proposed for Una 
is a mixed strategy,
playing 1 with probability $(2p-1)/(4p-1)$ and 0 with probability $2p/(4p-1)$ if 
$\theta>\frac12$ and with the reverse probabilities otherwise (see Figure
\ref{fig:2state}).  In \cite{HRSV}, it is proved that $\sigma^*$ is a
best response to $\tau^*$ hence $(\sigma^*,\tau^*)$ is a Nash equilibrium. 

In the case $p=p^*$, if Ian is playing $\sigma^*$,
it turns out there are only 4 possible values attained by 
Una's belief that the system is in state $S_1$.
Namely, we have $1-p < f_1(1-p) < f_0(p) < p$ and $f_1$ maps 
$1-p$, $f_1(1-p)$, $f_0(p)$ and $p$ to $f_1(1-p)$, $f_0(p)$, $p$ and $p$ respectively. 
Similarly $f_0$ maps $1-p$, $f_1(1-p)$, $f_0(p)$ and $p$ to $1-p$, $1-p$, 
$f_1(1-p)$ and
$f_0(p)$ respectively. \cite{HRSV} shows that $\sigma^*$ is a best response to 
a strategy $\tau^{**}$ (and hence
an equilibrium strategy), given by a four state automaton corresponding 
to these four values
of $\theta$ together with rules corresponding to the above: if Ian plays 1, then
the automaton moves one step to the right; if Ian plays 0, then the automaton
moves one step to the left (see Figure \ref{fig:fourstate}). 
In each state of the automaton, there is an associated probability 
distribution on Una's choice of 0 or 1, which they exhibit explicitly.

\begin{figure}\includegraphics[width=1.7in]{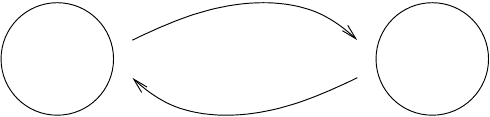}
\caption{For $p<\frac 23$, Una's automaton has two states, capturing whether
she believes it's more likely the system is in $S_1$ or $S_0$.
Whether $\theta>\frac 12$ or $\theta<\frac 12$ (but not the 
actual value of the belief) depends solely on Ian's last move.}\label{fig:2state}
\end{figure}

\begin{figure}
\includegraphics[width=4.5in]{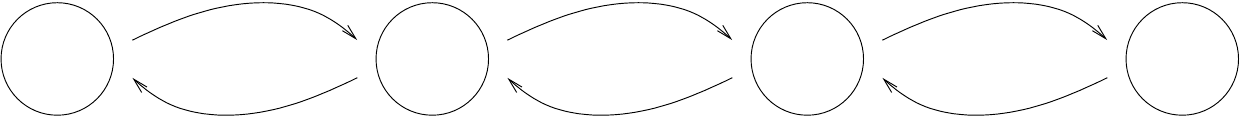}
\caption{For $p=p^*$, there are exactly 4 values of the Una's belief that may be 
attained starting from $\theta=\frac 12$. Una's automaton has 4 states,
one for each value of the Una's belief. 
Transitions between states are completely determined by Ian's moves.}
\label{fig:fourstate}
\end{figure}

Our results are based on exhibiting strategies for Una for which she
plays 0 and 1 with non-zero probabilities that depend solely on her belief that
the system is in state $S_1$ (assuming that Ian is playing $\sigma^*$).
Since Una's beliefs evolve in a manner that 
only depends on Ian's actions, we may once again describe her strategy by
an automaton. The principal differences are: (1) the automaton generally
has a countable number of states; and (2) the entire structure
of the automaton depends on $p$. An example of such an automaton 
is shown in Figure \ref{fig:infstate}.

\begin{figure}
\includegraphics[width=4.5in]{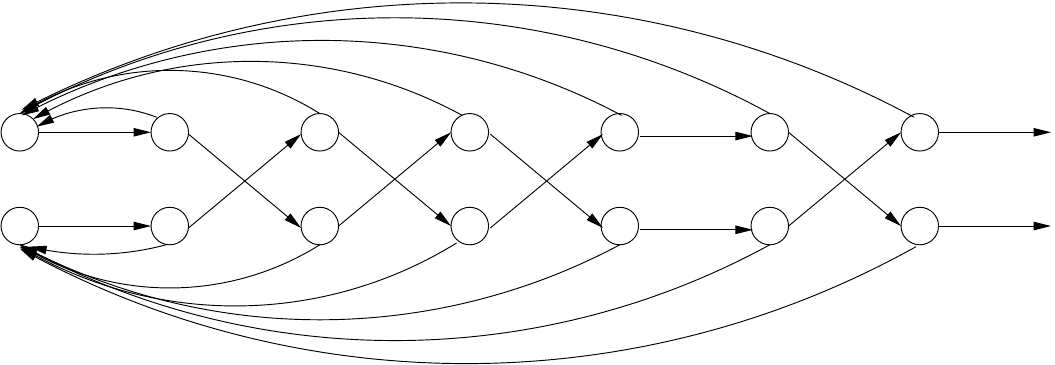}
\caption{Una's automaton for $p=0.72$. The states on the left 
of the diagram are those where the belief of Una is $p$ or $1-p$. Each state
corresponds to a value of $\theta$. Those in
the upper half of the diagram are those where Una believes it is more likely 
the system is in state $S_1$. If a state $\theta$ is in the upper half, 
its mirror image in the lower half is $1-\theta$. 
For states in the upper half of the diagram, if
Ian plays 1, the state returns to $p_0=p$, while if Ian plays 0,
the state advances to the right. In the lower half of the diagram,
if Ian plays 0, the state returns to $1-p$, while it advances if Ian plays 1.
The pattern of which arrows switch sides and which continue depends on $p$.
}\label{fig:infstate}
\end{figure}

The pattern of arrows is completely determined by $p$. The description of the 
strategy will be complete once we specify for each state, the probability of playing
1. Recall that the states are labelled by $(p_n)_{n\ge 0}$ and $(1-p_n)_{n\ge 0}$. 
If the automaton is in state $\theta$, we will define $x(\theta)$ to be the probability
that Una chooses 1. In this case, we will say that $(x(\theta))_{\theta\in[1-p,p]}$
is the strategy that Una is playing.

As mentioned above, to show that $\sigma^*$ is optimal, it suffices 
to find a strategy $x(\theta)$, to which $\sigma^*$ is the
best response. We therefore suppose that a particular strategy 
$x(\theta)$ has been selected
by Una, and we ask whether $\sigma^*$ is a best response for Ian. 
We will show that for certain $p$, we can exhibit an $x(\theta)$, solving 
the equations \eqref{eq:MDP} for Ian.

The state space that we use for Ian will consist of a pair $(\theta,s)$, where
$\theta$ is Una's belief that the system is in state $S_1$
and $s\in\{S_0,S_1\}$ is the state of the system. 
We define $(x(\theta))_{\theta\in[0,1]}$ recursively and give 
sufficient conditions for it to define a strategy for Una to which $\sigma^*$
is a best response. For the time being, we restrict attention
to the case $\Phi^n(p)\ne \frac 12$ for all $n$. This excludes 
countably many values of $p$. We set $\gamma=2p-1$ since this
is a quantity that occurs frequently.

Let $A_0=\begin{pmatrix}\gamma&-\gamma\\1-p&p\end{pmatrix}$,
$A_1=\begin{pmatrix}p&1-p\\-\gamma&\gamma\end{pmatrix}$,
$b_0=\begin{pmatrix}1\\0\end{pmatrix}$ and
$b_1=\begin{pmatrix}0\\1\end{pmatrix}$.
Let $\epsilon(\theta)=1$ if $\theta>\frac 12$ and 0 if $\theta<\frac12$. 
For $\theta\in[0,1]$, let $\eta_n(\theta)=\epsilon(\Phi^n(\theta))$.

Let $\iota_n=\eta_n(p)$.
Define $\vec w$ by
$$
\vec w=(I+A_{\iota_0}+A_{\iota_0}A_{\iota_1}+\ldots)\begin{pmatrix}
1\\1\end{pmatrix}.
$$

Define quantities $v$ and $Z$ (both depending on $p$) by 
\begin{equation}\label{eq:prevZ}
\begin{split}
v&=1/(pw_1+(1-p)w_2)\\
Z&=(w_1-w_2)v/2.
\end{split}
\end{equation}

\begin{prop}\label{prop:suffcond1}
Let $\frac 12<p < \frac 12+\frac{\sqrt{3}}{6}\approx 0.789$. 
Suppose that $\Phi^n(p)\ne \frac 12$
for each $n$ and let $v$ and $Z$ be as above. 
There is a unique solution to the equations
\begin{equation}
\label{eq:GHeqs}
\begin{split}
\begin{pmatrix}V_1(\theta)\\V_0(\theta)\end{pmatrix}&=
A_{\epsilon(\theta)} \begin{pmatrix}V_1(\Phi(\theta))\\ V_0(\Phi(\theta))\end{pmatrix}
-v\begin{pmatrix}1\\1\end{pmatrix} + (1-\gamma Z)b_{\epsilon(\theta)}
\text{ for $\theta\ne\tfrac 12$;}\\
V_1(\tfrac12)&=V_0(\tfrac12)=\tfrac 12-v-\gamma Z.
\end{split}
\end{equation}

Define $x(\theta)$ by
\begin{equation}\label{eq:prexdef}
x(\theta)=\begin{cases}
V_1(\theta)+v+\gamma Z&\text{if $\theta>\frac12$;}\\
1-(V_0(\theta)+v+\gamma Z)&\text{if $\theta< \frac12$;}\\
\tfrac12&\text{if $\theta=\frac12$,}
\end{cases}
\end{equation}

Suppose that the following inequalities are satisfied. 
\begin{equation}
\begin{split}\label{eq:preineqs}
&V_1(\theta)\ge  \gamma Z-v\text{ for $\theta<\tfrac12$}\\
&4\gamma Z\le 1,\\
-\gamma Z-v\le {}&{} V_1(\theta)\le 1-\gamma Z-v\text{ for $\theta >  \tfrac 12$.}
\end{split}
\end{equation}
Then $0\le x(\theta)\le 1$ for all $\theta$.
If $\tau$ is the strategy where Una plays 1 with probability
$x(\theta)$ if her belief that the system is in state $S_1$ is $\theta$,
then $\sigma^*$ is a best response to $\tau$
and the value of the game is $v$.
\end{prop}

\begin{proof}
One can check that for $p<\frac 12+\frac{\sqrt 3}6$
that the matrices $A_0$ and $A_1$ are strict contractions (with respect to
the Euclidean norm). 
Define the Banach space, $B=B([0,1],\R^2)$, of bounded 
$\R^2$-valued functions
on $[0,1]$ with norm given by $\|X\|=\sup_{\theta\in[0,1]}|X(\theta)|$.

We then define an operator, $\Lax$, on $B$ by
\begin{equation}\label{eq:preLax}
\Lax X(\theta)=
\begin{cases}
A_{\epsilon(\theta)}X(\Phi(\theta))-v\begin{pmatrix}1\\1\end{pmatrix}
+(1-\gamma Z)b_{\epsilon(\theta)}&\text{ if $\theta\ne\frac 12$;}\\
\begin{pmatrix}\frac 12-v-\gamma Z\\\frac 12-v-\gamma Z\end{pmatrix}
&\text{ if $\theta=\frac 12$.}
\end{cases}
\end{equation}
One sees that $\Lax$ is a contraction of $B$, and therefore has a 
unique fixed point, $X^*(\theta)=\left(\begin{smallmatrix}
V_1(\theta)\\V_0(\theta)\end{smallmatrix}\right)$. This establishes the 
first claim.

We now show that $V_1(p)=V_0(1-p)=-Z$ and $V_0(p)=V_1(1-p)=Z$. 
Since one has $\Phi(1-x)=1-\Phi(x)$ one sees that if $\left(\begin{smallmatrix}
V_1(\theta)\\V_0(\theta)\end{smallmatrix}\right)$ is a solution to \eqref{eq:GHeqs},
then so is $\left(\begin{smallmatrix}
V_0(1-\theta)\\V_1(1-\theta)\end{smallmatrix}\right)$. Hence, by uniqueness,
$V_1(\theta)=V_0(1-\theta)$. It follows that $x(1-\theta)=1-x(\theta)$. 

By iterating \eqref{eq:GHeqs} and using the fact that the $A_\epsilon$
are contracting, one obtains
\begin{align*}
\begin{pmatrix}V_1(p)\\V_0(p)\end{pmatrix}&=
-v(I+A_{\iota_0}+A_{\iota_0}A_{\iota_1}+\ldots)\begin{pmatrix}1\\1\end{pmatrix}\\
&+(1-\gamma Z)\left(b_{\iota_0}+A_{\iota_0}b_{\iota_1}+
A_{\iota_0}A_{\iota_1}b_{\iota_2}+\ldots\right)
\end{align*}
If one defines $\psi_i(x)=b_i+A_ix$, then the term in the last parentheses 
is $\lim_{n\to\infty}\psi_{\iota_0}\psi_{\iota_1}\ldots\psi_{\iota_n}
(\begin{smallmatrix}0\\0\end{smallmatrix})$. Since the $\psi_i$ are contracting
and have a common fixed point of $(\begin{smallmatrix}1\\1\end{smallmatrix})$,
we deduce this term is exactly this fixed point. 
Hence we have
\begin{equation*}
\begin{pmatrix}V_1(p)\\V_0(p)\end{pmatrix}=-v\vec w+(1-\gamma Z)
\begin{pmatrix}1\\1\end{pmatrix},
\end{equation*}
so that $V_1(p)=-vw_1+(1-\gamma Z)=-Z$ and $V_0(p)=-vw_2+(1-\gamma Z)=Z$ and then
$V_0(1-p)$ and $V_1(1-p)$ are $-Z$ and $Z$ respectively by the symmetry.

Now define $x(\theta)$ using \eqref{eq:prexdef} and assume the inequalities
\eqref{eq:preineqs} are satisfied. The final pair of inequalities of 
\eqref{eq:preineqs} ensures that $0\le x(\theta)\le 1$ for each $\theta>\frac 12$.
By the symmetry, one obtains $0\le x(\theta)\le 1$ for each $\theta$ as required.

Let $\tau$ be the strategy for
Una where if her belief is $\theta$, she plays 1 with probability $x(\theta)$. 
Then define $V(s,\theta)$ to be $V_1(\theta)$ if $s=S_1$ and $V_0(\theta)$ if $s=S_0$. 
We show that $\sigma^*$ is a best response to $\tau$ with average long-term
gain $v$. 

For \eqref{eq:MDP} to be satisfied, if $\theta>\frac 12$ and the system is
in state $S_1$, Ian should receive equal long-term gain from playing either
move (as he makes both with positive probability) whereas in state
$S_0$, he should make a larger gain by playing 0. In other words, to satisfy
\eqref{eq:MDP} if $\theta>\frac 12$, we require:
\begin{align*}
V_1(\theta)+v&=x(\theta)+pV_1(p)+(1-p)V_0(p)\\
&=pV_1(f_0(\theta))+(1-p)V_0(f_0(\theta))\\
V_0(\theta)+v&=1-x(\theta)+(1-p)V_1(f_0(\theta))+pV_0(f_0(\theta))\\
&\ge (1-p)V_1(p)+pV_0(p),
\end{align*}
with similar requirements when $\theta<\frac 12$.

Substituting the values for $V_1$ and $V_0$ at $p$ and $1-p$, these requirements 
are for $\theta>\frac 12$:
\begin{equation}\label{eq:reqs}
\begin{split}
V_1(\theta)+v&=x(\theta)-\gamma Z\\
&=pV_1(f_0(\theta))+(1-p)V_0(f_0(\theta))\\
V_0(\theta)+v&=1-x(\theta)+(1-p)V_1(f_0(\theta))+pV_0(f_0(\theta))\\
&\ge \gamma Z,
\end{split}
\end{equation}
again with similar requirements when $\theta<\frac 12$. 

The first equality of \eqref{eq:reqs} is satisfied by definition of $x(\theta)$
and the second is the first component of \eqref{eq:GHeqs}.
For the third equality, notice that by using the first two equalities
one has $1-x(\theta)=1-pV_1(\Phi(\theta))-(1-p)V_0(\Phi(\theta))-\gamma Z$. 
Now, the second component of \eqref{eq:GHeqs} gives
$V_0(\theta)+v=(1-2p)V_1(\Phi(\theta))+(2p-1)V_0(\Phi(\theta))+1-\gamma Z$.
Combining these, we obtain the third equality of \eqref{eq:reqs}. 
Finally the hypothesis that $V_1(\theta)\ge -\gamma Z-v$ together with the 
symmetry yields $V_0(\theta)+v\ge-\gamma Z$ giving the required inequality
in \eqref{eq:reqs}.
\end{proof}

\begin{figure}
\includegraphics[width=6cm]{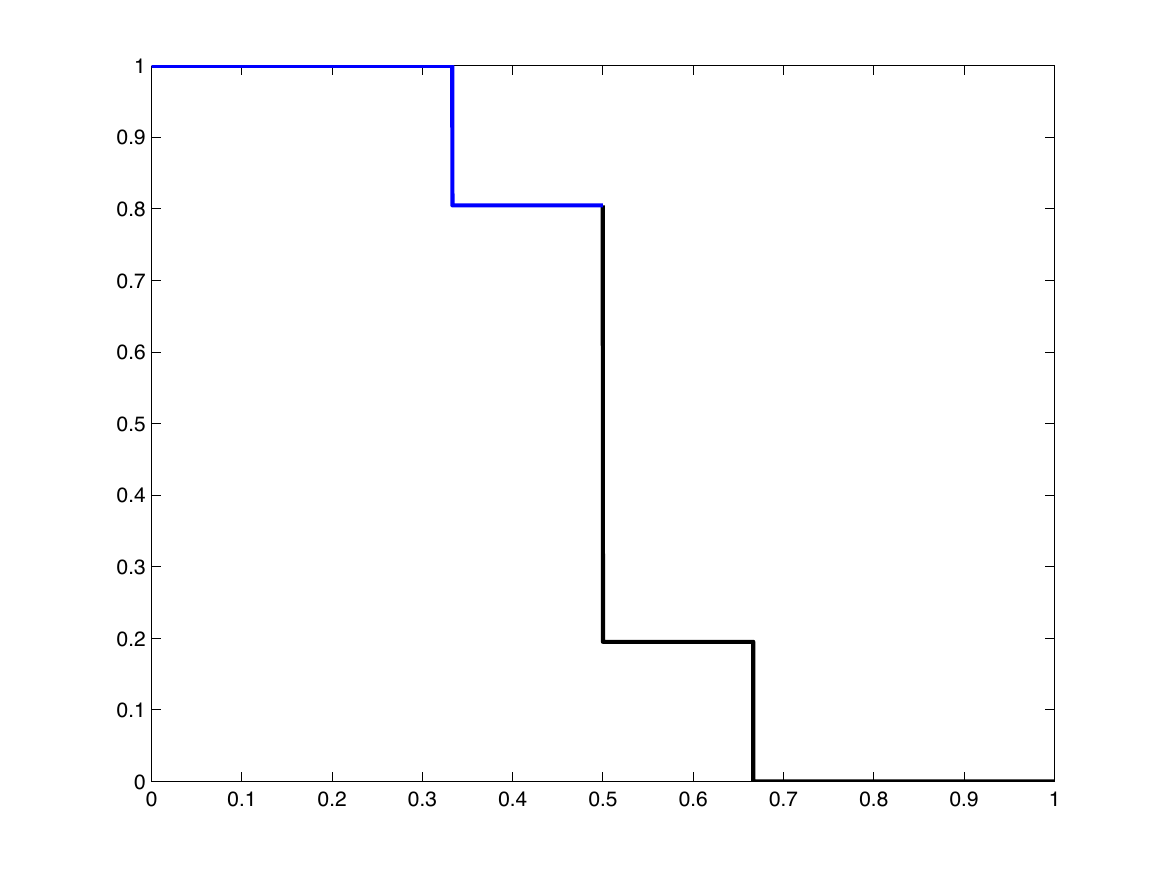}
\includegraphics[width=6cm]{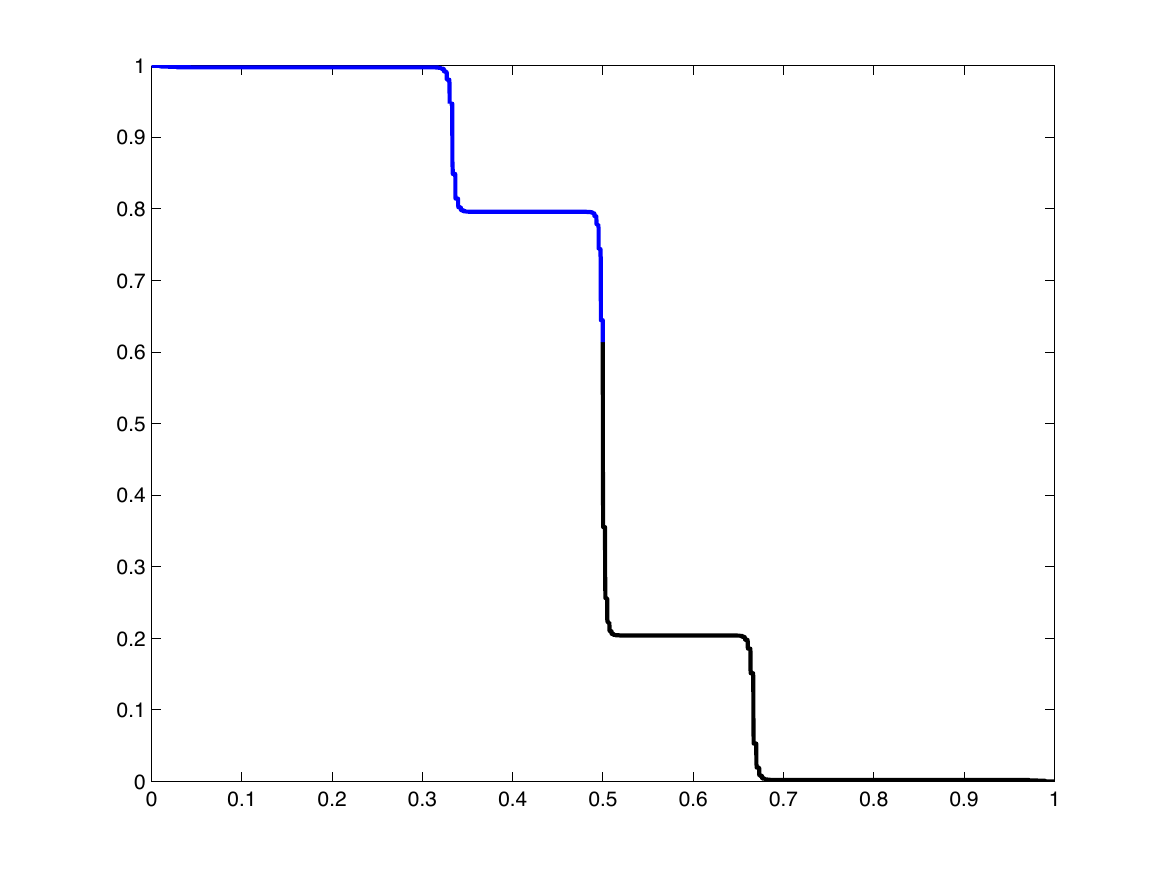}
\includegraphics[width=6cm]{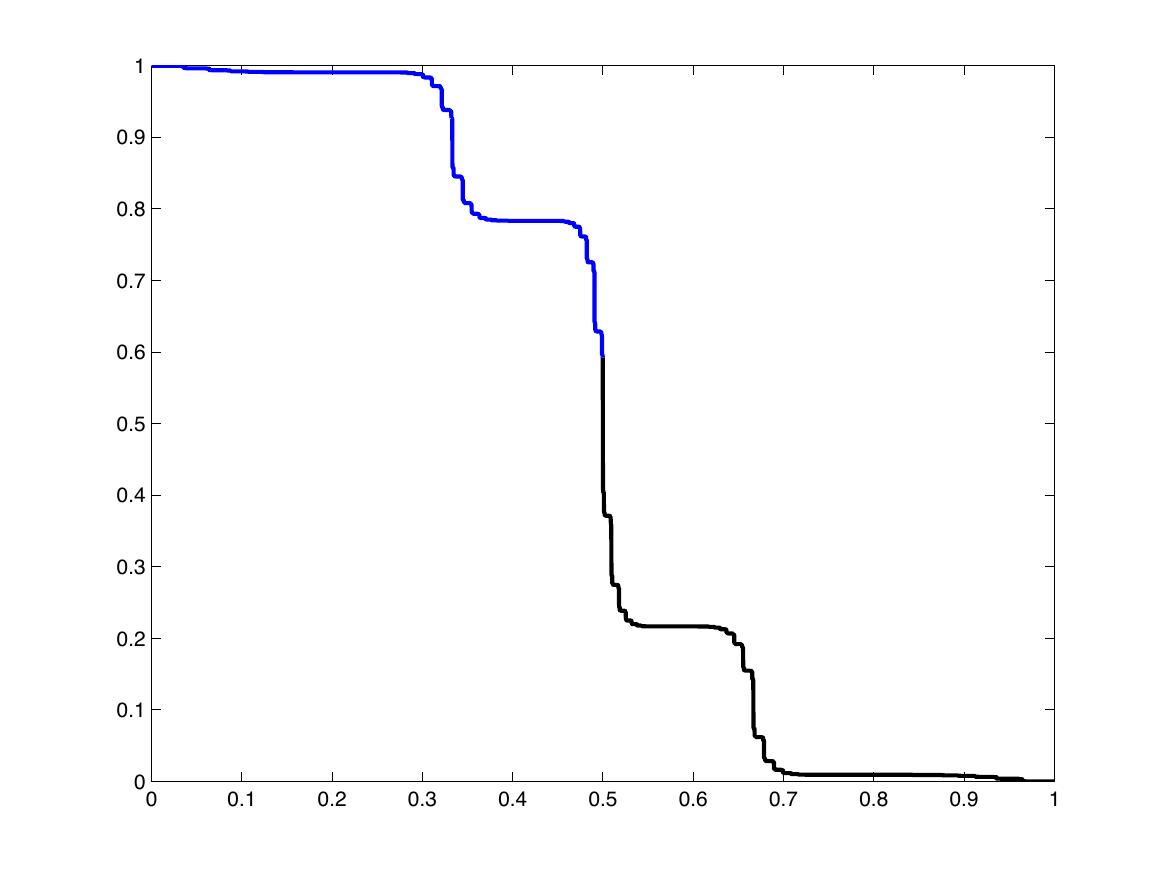}
\includegraphics[width=6cm]{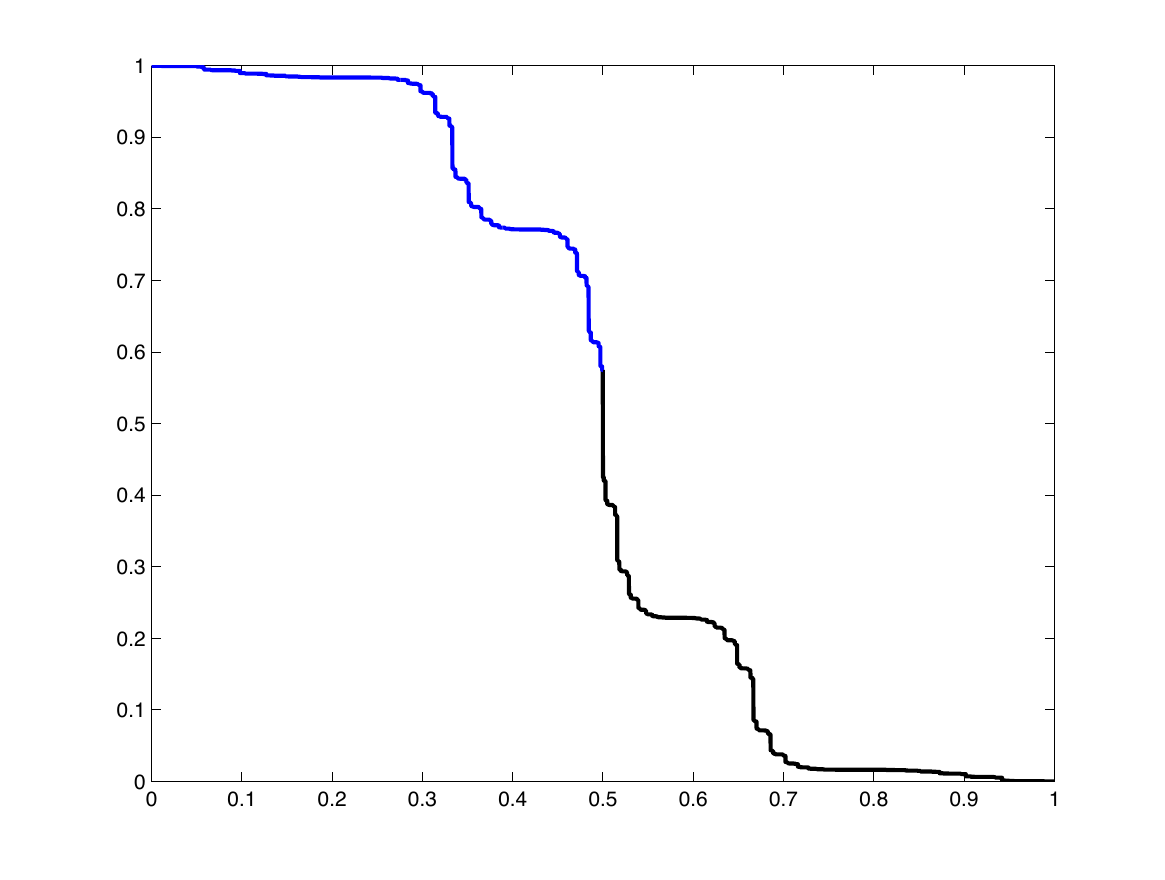}
\includegraphics[width=6cm]{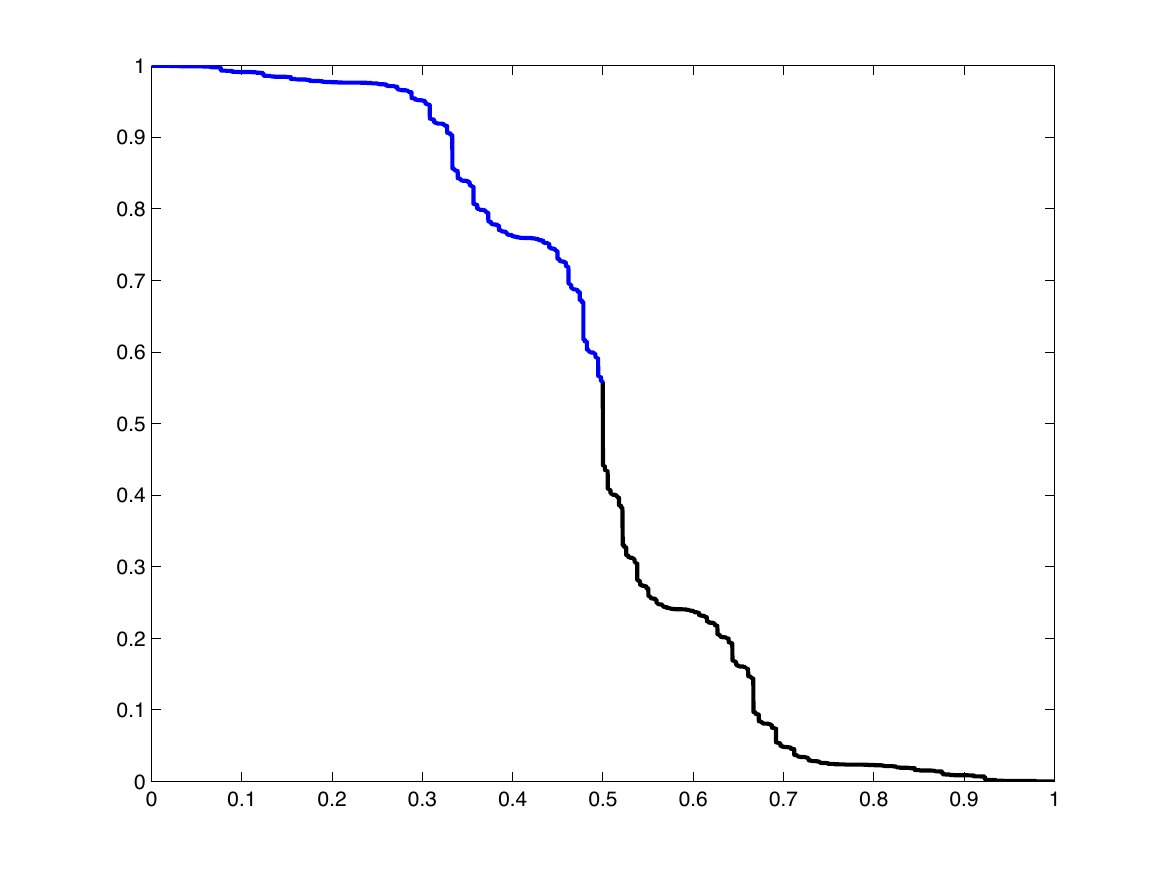}
\includegraphics[width=6cm]{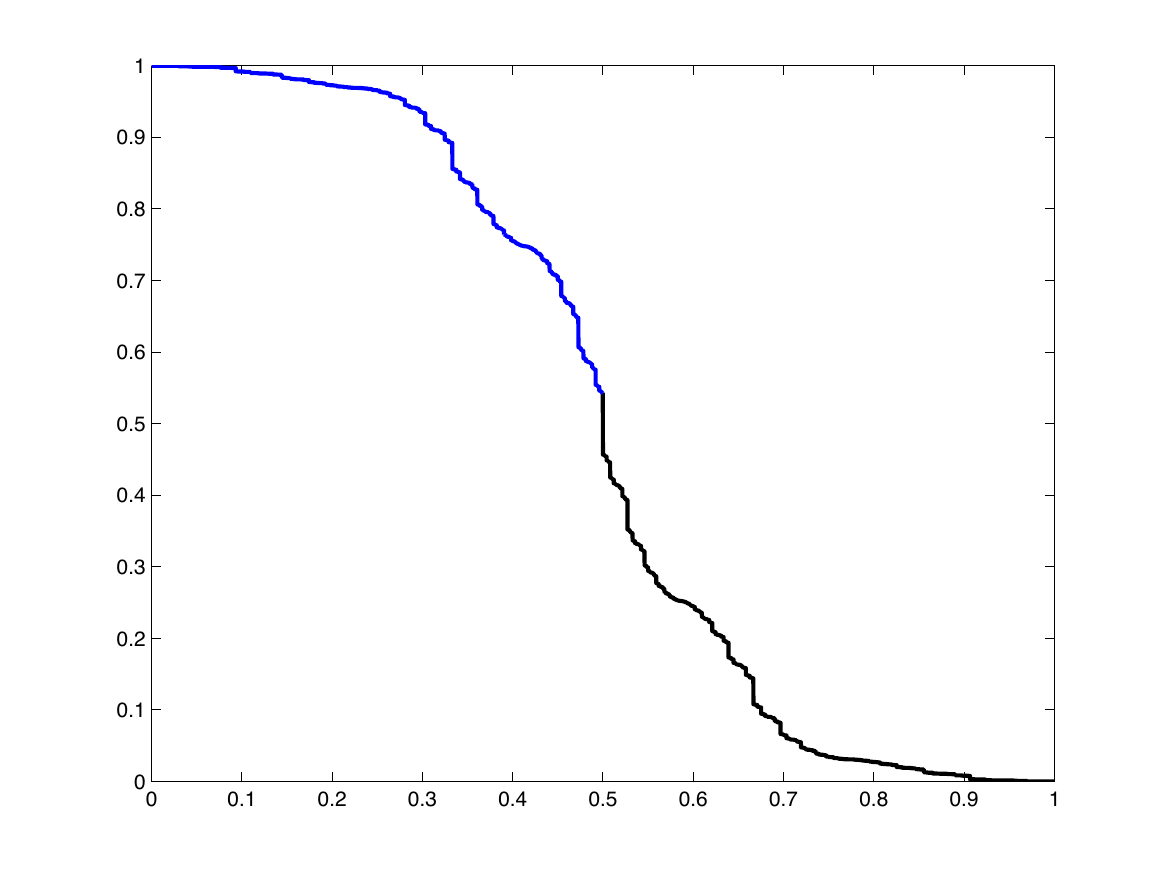}
\includegraphics[width=6cm]{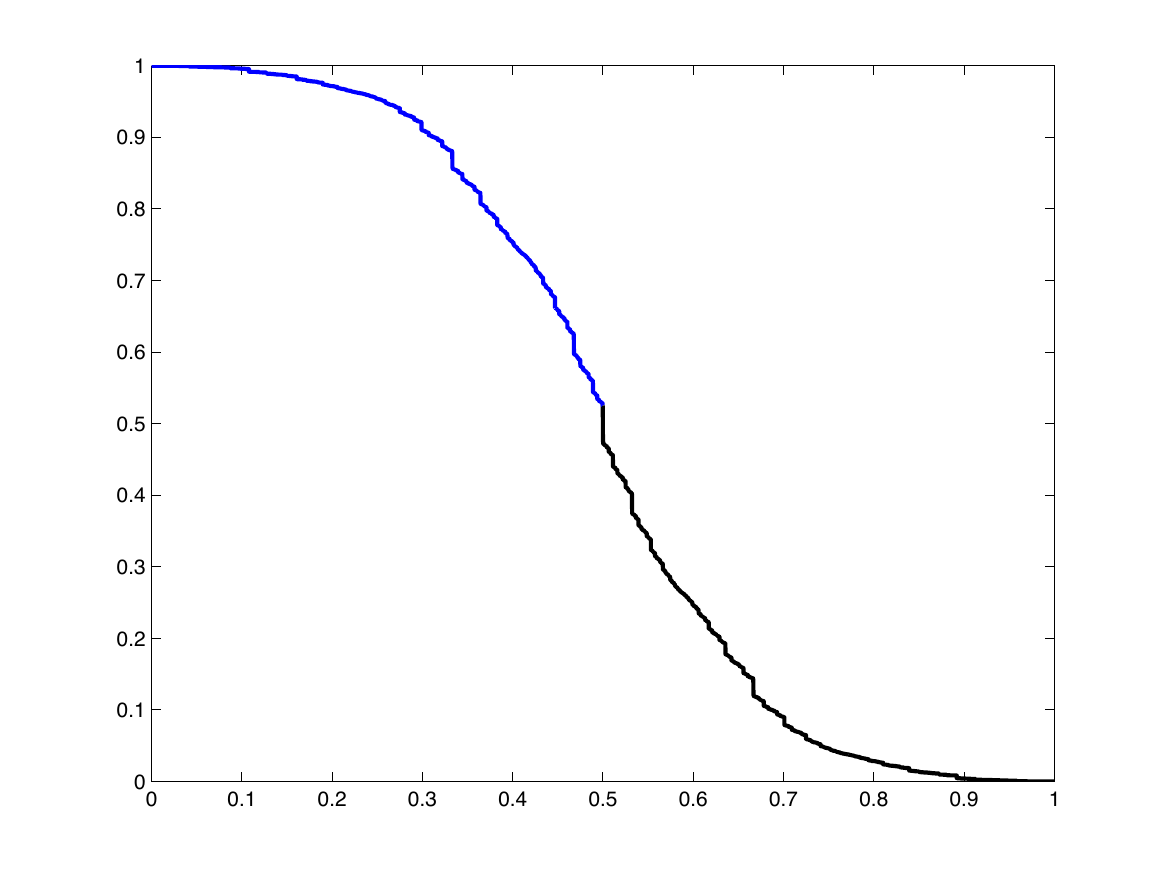}
\includegraphics[width=6cm]{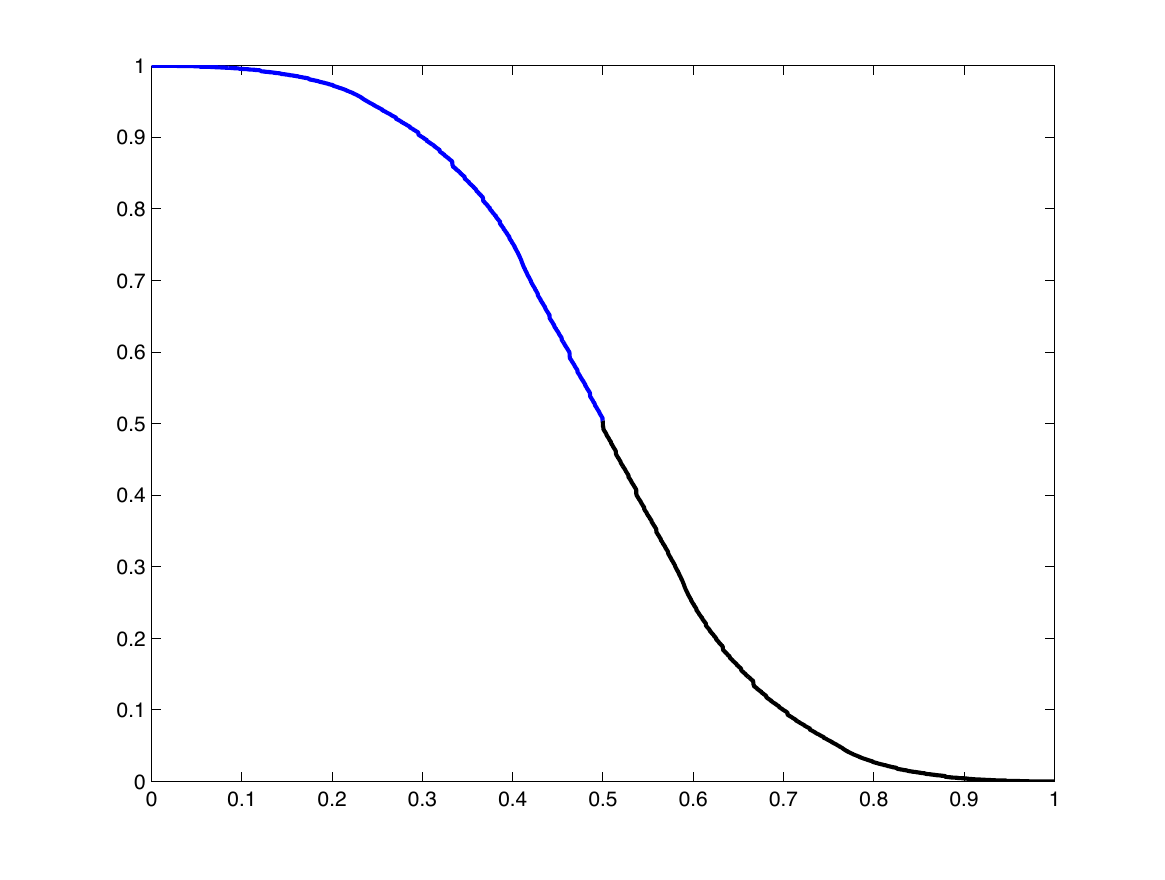}
\caption{The graphs of $\theta \mapsto x(\theta)$ for values of $p$ 
ranging from $p=0.6625$ to $p=0.7325$ in  steps of $0.01$.} 
\end{figure}

Notice that by \eqref{eq:prevZ} and Proposition \ref{prop:suffcond1},
we now have a second, apparently independent equation for the long-term
average gain, $v$, (whenever 
the conditions of the proposition are satisfied).
We verify that the expressions are equal as this reveals useful identities.

Starting from this second expression, we have
\begin{align*}
1/v&=\begin{pmatrix}p&1-p\end{pmatrix}
(I+A_{\epsilon_0}+A_{\epsilon_0}A_{\epsilon_1}+
A_{\epsilon_0}A_{\epsilon_1}A_{\epsilon_2}
+\ldots)\begin{pmatrix}1\\1\end{pmatrix}\\
&=\begin{pmatrix}1&1\end{pmatrix}(I+A^T_{\epsilon_0}+A^T_{\epsilon_1}
A^T_{\epsilon_0}+A^T_{\epsilon_2}A^T_{\epsilon_1}A^T_{\epsilon_0}
+\ldots)\begin{pmatrix}p\\1-p\end{pmatrix}
\end{align*}

Notice that
$$
\begin{pmatrix}1&0\\1&1\end{pmatrix}A_\epsilon^T
\begin{pmatrix}1&0\\-1&1\end{pmatrix}=U_\epsilon,
$$
for $\epsilon\in\{0,1\}$. 

Accordingly, we can rewrite the expression for $1/v$ as
\begin{align*}
&\begin{pmatrix}1&1\end{pmatrix}
\begin{pmatrix}1&0\\-1&1\end{pmatrix}
(I+U_{\epsilon_0}+U_{\epsilon_1}U_{\epsilon_0}+U_{\epsilon_2}U_{\epsilon_1}U_{\epsilon_0}
+\ldots)
\begin{pmatrix}1&0\\1&1\end{pmatrix}
\begin{pmatrix}p\\1-p\end{pmatrix}\\
=&\begin{pmatrix}0&1\end{pmatrix}
(I+U_{\epsilon_0}+U_{\epsilon_1}U_{\epsilon_0}+
U_{\epsilon_2}U_{\epsilon_1}U_{\epsilon_0}+\ldots)
\begin{pmatrix}p\\1\end{pmatrix}.
\end{align*}
This expression matches the one that we found in \eqref{eq:vval1}.

\section{Conditions for monotonicity}
\label{sec:monotonicity}
To prove that the inequalities \eqref{eq:preineqs} are satisfied 
(in a range of values of 
$p$) we are going to show that $V_1$ (and $V_0$) are monotonic, and we 
control the boundary values.
For convenience, we work in this section with a dynamical system 
$\alpha$ derived from $\Phi$, namely $\alpha(t)=\max(\Phi(t),1-\Phi(t))$.
This exploits the symmetry of $\Phi$ (that $\Phi(1-t)=1-\Phi(t)$) and chooses
the representative of each $t$ in the interval $[\frac 12,1]$.
In this section, we show that the monotonicity
conditions follow from a \emph{pressure} condition for a 
given potential for the dynamical system $\alpha$. We write
$\alpha^n(t)$ for the $n$-fold iterate of the map $\alpha$. It is easy
to verify that $\alpha^n(t)=\max(\Phi^n(t),1-\Phi^n(t))$, where $\Phi^n$
similarly denotes the $n$-fold iterate of $\Phi$.

Given a parameter $p$ and a function $g$ on $[\frac 12,1]$, we define 
the $\alpha$-\emph{pressure} of $g$ to be
\begin{equation*}
P_{\alpha_p}(g):=\limsup_{n\to\infty}\frac 1n\log\sum_{x\in\alpha^{-n}(\frac 12)}
\exp\left(\sum_{i=0}^{n-1}g(\alpha^i x)\right).
\end{equation*}
Pressure, introduced by Ruelle in \cite{Ruelle}, is a dynamical analogue of
the partition function in statistical mechanics. Its value is a combination
of the long-term average value along orbits of $g$ with the complexity of 
$\alpha_p$. The definition here is not equivalent to Ruelle's. Notice that 
$P_{\alpha_p}(0)$ simply counts the growth rate of the number of 
pre-images of $\frac 12$. 
Since $\alpha^n$ is a piecewise monotonic function where the direction of 
monotonicity changes at $x$ exactly when $\alpha^j(x)=\frac 12$ for some $j\le n$,
$P_{\alpha_p}(0)$ is precisely the logarithmic growth rate of the number
of intervals of monotonicity of $\alpha^n$. This quantity was shown by 
Misiurewicz and Szlenk \cite{MisSzl} and by Young \cite{Young} to be equal to
the \emph{topological entropy} of $\alpha_p$ (which is also equal to 
Ruelle's pressure evaluated at $g\equiv 0$). 
Topological entropy is a standard measure
of complexity for a continuous dynamical system. 

We will need the following simple result independent of our specific context: 
\begin{lem}\label{lem:purejump}
Let $(a_k)$ be a sequence in $[0,1]$ with $a_k\ne a_{k'}$ for all $k\ne k'$
and let $(b_k)$ be a summable sequence of non-negative numbers. 
Suppose that $(f_n)$ is a sequence of real-valued functions defined on 
$[0,1]$, each of pure
jump type. Suppose further that the only discontinuities of $(f_n)$ 
occur at the $a_k$'s 
and that $|\Delta f_n(a_k)|\le b_k$ for each $k$ and $n$,
where $\Delta f(x)=\lim_{t\downarrow x}f(t)-\lim_{t\uparrow x}f(t)$. 
If $\|f_n-f\|_\infty\to 0$,
then $f$ is of pure jump type with discontinuities only at the $a_k$'s. 
The magnitude of the discontinuity of $f$ at $a_k$ is bounded above by $b_k$.
\end{lem}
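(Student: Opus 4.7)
The plan is to verify three properties of $f$ --- existence of one-sided limits at every point, continuity off of $\{a_k\}$, and a bound $|\Delta f(a_k)| \le b_k$ for each $k$ --- and then to reconstruct $f$ as a constant plus a uniformly convergent sum of step functions indexed by $\{a_k\}$, which is exactly the statement that $f$ is of pure jump type. The essential tool throughout is that summability of $(b_k)$ controls the oscillation of each $f_n$ on small windows, and this estimate transfers to the uniform limit $f$.

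For the existence of one-sided limits and for continuity off $\{a_k\}$, I would use a Cauchy argument. Fix $x \in [0,1]$ and $\varepsilon > 0$; choose $K$ with $\sum_{k > K} b_k < \varepsilon/3$, and pick $\delta > 0$ so that none of $a_1, \ldots, a_K$ lies in $(x, x + \delta)$. Since each $f_n$ is of pure jump type with all discontinuities confined to $\{a_k\}$, its oscillation on $(x, x+\delta)$ is bounded by $\sum_{a_k \in (x, x+\delta)} |\Delta f_n(a_k)| \le \sum_{k > K} b_k < \varepsilon/3$. Choosing $n$ with $\|f_n - f\|_\infty < \varepsilon/3$ then yields oscillation of $f$ on $(x, x+\delta)$ at most $\varepsilon$, so $\lim_{t \to x^+} f(t)$ exists, and the left-hand case is analogous. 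When $x \notin \{a_k\}$, the same device applied to a two-sided neighborhood gives continuity at $x$. The bound on $\Delta f(a_k)$ follows by combining this oscillation estimate, which controls one-sided limits of $f_n$ near $a_k$ uniformly in $n$, with uniform convergence $f_n \to f$, yielding $\Delta f_n(a_k) \to \Delta f(a_k)$ and hence $|\Delta f(a_k)| \le b_k$.

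To finish, define $g(x) = \sum_k \Delta f(a_k)\, \mathbf{1}_{[a_k, 1]}(x)$ and the analogous $g_n(x) = \sum_k \Delta f_n(a_k)\, \mathbf{1}_{[a_k, 1]}(x)$; both series converge uniformly by dominated convergence against $(b_k)$, and $g_n \to g$ uniformly. Since each $f_n$ is of pure jump type with the prescribed jumps, the difference $f_n - g_n$ is a constant $C_n$, and the uniform convergences $f_n \to f$ and $g_n \to g$ force $C_n$ to converge to some constant $C$ with $f = g + C$. This exhibits $f$ as a pure jump function whose discontinuities lie in $\{a_k\}$ and whose jumps are bounded in modulus by $b_k$. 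The main obstacle I anticipate is the possible accumulation of the $(a_k)$, which prevents us from ever isolating a single discontinuity in a neighborhood free of the others; this is precisely where summability of $(b_k)$ is needed, via the oscillation bound, in place of a more naive argument, and once that uniform estimate is in hand the remaining steps are essentially routine.
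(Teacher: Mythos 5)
Your proof is correct, but it takes a genuinely different route from the paper's. The paper observes that the hypotheses give the $f_n$ uniformly bounded variation, passes to the limit to conclude $f\in BV$, invokes the Lebesgue decomposition $f=f_c+f_d$ into a continuous and a jump part, and then uses the tail estimate $\sum_{k\ge K}b_k<\epsilon$ (applied to disjoint intervals avoiding $a_1,\dots,a_{K-1}$, together with lower semicontinuity of variation under uniform convergence) to conclude that $Vf_c<\epsilon$ for every $\epsilon$, so the continuous part is trivial and $f_d$ can only jump at the $a_k$'s. You use the same tail-sum trick but package it as a uniform-in-$n$ oscillation estimate rather than a variation estimate, and you avoid the BV/Lebesgue-decomposition machinery entirely: you get one-sided limits of $f$ by a Cauchy argument, deduce $\Delta f_n(a_k)\to\Delta f(a_k)$ (a Moore--Osgood interchange, which the paper never needs to make explicit), and then reconstruct $f$ explicitly as a constant plus the uniformly convergent series $\sum_k\Delta f(a_k)\mathbf 1_{[a_k,1]}$. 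Your version is more elementary and yields slightly more (the jump-by-jump convergence and an explicit series representation), at the cost of being longer; the paper's is shorter because it outsources the structure theory to the Lebesgue decomposition. One shared caveat: both arguments implicitly bound the local variation of $f_n$ at $a_k$ by $|\Delta f_n(a_k)|=|f_n(a_k^+)-f_n(a_k^-)|$, which presumes $f_n(a_k)$ lies between the one-sided limits (so the up- and down-parts of the jump do not partially cancel); likewise your claim that $f_n-g_n$ is constant holds off the set $\{a_k\}$, which is all that matters in the paper's $L^\infty$ setting. Neither point is a real gap here.
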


\begin{proof}
Denote $V f$ the total variation of $f$:
$$
Vf=\sup_m\sup_{0=x_0\le x_1\le\ldots\le x_m=1}\sum_{i=1}^m|f(x_i)-f(x_{i-1})|.
$$
Let $V_I f$ be the variation of $f$ 
on the interval $I$. 

For any $0=x_0\le\ldots\le x_m=1$, notice that $\sum_{i=1}^m|f_n(x_i)-f_n(x_{i-1})|
\to \sum_{i=1}^m|f(x_i)-f(x_{i-1})|$. Hence since the left side is uniformly bounded
by $\sum_k b_k$ for all $n$ and for all $0=x_0\le\ldots\le x_m=1$, we deduce that
$f$ has bounded variation.

Hence it has a unique (up to additive constants) Lebesgue decomposition
as a sum $f_c+f_d$ where $f_c$ is continuous and $f_d$ has only jump-type 
discontinuities. It is known that $Vf=Vf_c+Vf_d$. 
For any $\epsilon>0$, there exists a $K$ such that $\sum_{k\ge K}b_k<\epsilon$.
Letting $I_1,\ldots,I_M$ be any disjoint collection of intervals avoiding
the $a_k$'s with $k<K$, we see that $\sum_{i=1}^M V_{I_i}f<\epsilon$.
In particular, we deduce $Vf_c<\epsilon$ for arbitrary $\epsilon$ so that $f$ has 
pure jump type. We also deduce that $f_d$ cannot have any jump discontinuities 
other than at the $a_k$'s and the result is proven.
\end{proof}

\begin{prop}\label{prop:pressurecond}
Let $\frac 12 < p< \frac 12+\frac{\sqrt 3}6$
be such that $P_{\alpha_p}(\log h)<0$ (where $h(t)=\gamma/t$ and
$\gamma$, as before, is defined to be $2p-1$). Then the conditions 
\eqref{eq:preineqs} of Proposition \ref{prop:suffcond1} are satisfied.
Hence $\sigma^*$ is an optimal strategy for Ian for the game with this value of $p$.
\end{prop}

\begin{proof}
First, assume that $p$ is such that $\Phi^n(p)\ne \frac 12$ for all $n$ as
this is a hypothesis for Proposition \ref{prop:suffcond1}.
Let $X^0(\theta)=\begin{pmatrix}-\gamma Z\\-\gamma Z\end{pmatrix}$
and set $X^n=\Lax^n(X_0)$. Since $\mathcal L$ is a contraction mapping,
we have $\left\|X^n-X^*\right\|\to 0$,
where $X^*(\theta)=\begin{pmatrix}V_1(\theta)\\V_0(\theta)\end{pmatrix}$
is the fixed point of $\Lax$
from Proposition \ref{prop:suffcond1}. Notice that
$\Lax$ preserves the set of functions $\left\{\begin{pmatrix}f_1(\theta)\\f_2(\theta)
\end{pmatrix}\colon f_2(\theta)=f_1(1-\theta)\right\}$.

From the contraction mapping
theorem, there exists $M>0$ such that $|\Delta X^n(\frac 12)|\le M$ 
for all $n$. From \eqref{eq:preLax}, we observe that for $\theta\ne\frac 12$,
\begin{equation}\label{eq:disconts}
\Delta X^n(\theta)=A_{\epsilon(\theta)}\Delta X^{n-1}(\Phi(\theta)).
\end{equation}

Notice that $X^n$ only has discontinuities at pre-images of $\frac 12$ of order 
at most $n$ and is piecewise constant between discontinuities. We now show
that if $P_{\alpha_p}(\log h)<0$, then the conditions of Lemma
\ref{lem:purejump} are satisfied by the components of $X^n(\theta)$
and that $V_1(\theta)$ and $V_0(\theta)$
are monotonically decreasing and increasing respectively. 

Suppose that $\theta>\frac 12$. Then we have $\Phi(\theta)=f_0(\theta)=
(3p-1)-\gamma/\theta$
and we calculate
$$
A_1\begin{pmatrix}\Phi(\theta)-1\\ \Phi(\theta)\end{pmatrix}
=\frac{\gamma}{\theta}\begin{pmatrix}\theta-1\\ \theta\end{pmatrix}.
$$
Similarly, if $\theta<\frac 12$, we have
$$
A_0\begin{pmatrix}\Phi(\theta)-1\\ \Phi(\theta)\end{pmatrix}
=\frac{\gamma}{1-\theta}\begin{pmatrix}\theta-1\\ \theta\end{pmatrix}.
$$
If $t\in\Phi^{-n}(\frac 12)$, let $\theta_i=\Phi^{n-i}(t)$ (so that $\theta_0=\frac12$
and $\theta_n=t$) and
let $\epsilon_i=\epsilon(\theta_i)$ and $u=\max(t,1-t)$. The above shows
$$
A_{\epsilon_i}\begin{pmatrix}\theta_{i-1}-1\\ \theta_{i-1}\end{pmatrix}
=\frac{\gamma}{\max(\theta_i,1-\theta_i)}
\begin{pmatrix}\theta_{i}-1\\ \theta_{i}\end{pmatrix}.
$$
Combining these equalities gives
\begin{equation}\label{eq:coc}
A_{\epsilon_n}\cdots A_{\epsilon_1}\begin{pmatrix}-\frac12\\\tfrac12\end{pmatrix}
=\prod_{i=1}^{n}\frac{\gamma}{\max(\theta_i,1-\theta_i)}
\begin{pmatrix}\theta_n-1\\ \theta_n\end{pmatrix}.
\end{equation}

Using \eqref{eq:disconts}, we see that for $m>n$, one has
\begin{equation*}
\Delta X^m(t)=A_{\epsilon_n}\ldots A_{\epsilon_1}\Delta X_{m^n}(\tfrac 12)
\end{equation*}

By symmetry,
we see that $\Delta X^k(\frac12)$ is a
multiple of $\left(\begin{smallmatrix}-1\\1\end{smallmatrix}\right)$ for all $k$.
Since $1-p\le t\le p$, one obtains
$$
\big|\Delta X^m(t)\big|\le C
\prod_{i=1}^n \frac{\gamma}{\max(\theta_i,1-\theta_i)},
$$
where $C$ does not depend on $t$, $m$ or $n$.
This can be re-expressed in terms of $\alpha$ by 
$|\Delta X^m(t)|\le C\prod_{i=1}^n (\gamma/\alpha^i(u))$ for all $m$,
where $u=\max(t,1-t)$ satisfies $\alpha^n(u)=\frac 12$. Hence
we see the hypothesis, $P_{\alpha_p}(\log h)<0$, ensures
that the conditions for Lemma \ref{lem:purejump} are satisfied.
Hence $V_1$ and $V_0$ of pure jump type.

The jumps satisfy $\Delta X^*(\theta)=
A_{\epsilon(\theta)}\Delta X^*(\Phi(\theta))$. By \eqref{eq:coc}, they
are all of the same sign.
Now provided the pressure is negative and $p$ is not a pre-image of $\frac 12$,
we check using \eqref{eq:GHeqs} that $V_1(\frac12^+)=\gamma Z-v$,
$V_1(\frac 12^-)=1-3\gamma Z-v$ so that $\Delta V_1(\frac 12)=
4\gamma Z-1$.
On the other hand, the total of all discontinuities (all of the same sign) is
$-2Z$. In order for these to have the same sign, one sees that $Z>0$ and $4\gamma Z<1$.
The function $V_1(\theta)$ is therefore a decreasing function.

Now to check \eqref{eq:preineqs}, it suffices to show that
$V_1(\frac 12^-)\ge \gamma Z-v$, $V_1(\frac 12^+)\le 1-\gamma Z-v$
and $V_1(p)\ge -\gamma Z-v$. The first two of these follow from
the fact that $Z>0$ and $4\gamma Z<1$. To verify the last inequality,
we note that the above contraction argument works outside the range $[1-p,p]$,
so that $V_1$ is monotonic on all of $[0,1]$. Since $\Phi(1)=p$, we apply 
\eqref{eq:GHeqs} to see that $V_1(1)=-\gamma Z-v$, so that the third
inequality is satisfied by monotonicity.

In the case where $p$ is a preimage of $\frac 12$, the above expressions
for $V_1(\frac 12^+)$ and $V_1(\frac 12^-)$ are no longer valid as 
$V_1$ and $V_0$ are discontinuous at $1-p$ and $p$. The essential modification
is to show that $V_1(\frac 12^+)-V_1(\frac 12)=V_1(\frac 12)-V_1(\frac 12^-)$. 
The matrix equalities \eqref{eq:GHeqs} then ensure that at each 
$x\in\bigcup_n\Phi^{-n}(\frac 12)$, one has that $V_1(x)$ is the average 
of $V_1(x^-)$ and $V_1(x^+)$ (and similarly for $V_0$). 
For a fixed $p$, this allows us to deduce monotonicity and verify inequalities 
on entire intervals of $\theta$ values by checking the inequalities at a
finite collection of points as before.

\end{proof}

\section{Pressure bounds}
\label{sec:pressure}
In this section, we find ranges of $p$ where $P_{\alpha_p}(\log h)<0$ is satisfied
(so that $\sigma^*$ is an optimal strategy for Ian).
Indeed if $p\in[\frac 12,\frac 23]$, then $\alpha^{-n}(\frac 12)$ 
is empty, so that trivially $P_{\alpha_p}(\log h)<0$.

Henceforth, we assume $p>\frac 23$. 
Notice that $\Phi(t)<\frac 12$ if and only if $t<\frac 23$. 
The map $\alpha$ can therefore be expressed as:
$$
\alpha(t)=
\begin{cases}
2-3p+\gamma/t&\text{if $t<\frac 23$;}\\
3p-1-\gamma/t&\text{if $t\ge \frac23$.}
\end{cases}
$$
This map is unimodal: monotone decreasing on the left branch and
increasing on the right branch
with $\alpha([\frac 12,\frac 23])=\alpha([\frac 23,1])=[\frac 12,p]$. 
We write 
$h^{(n)}(t)=h(t)\cdot h(\alpha(t))\cdots h(\alpha^{n-1}(t))$.

We partition $[\frac 12,p]$ into sub-intervals, counting possible transitions
between pairs of intervals, and over-estimating $\psi$ on the intervals to give
a rigorous, finitely-calculable estimate for the pressure in various ranges of $p$. 

It turns out that for $p$ in the range $[\frac 23,p^*]$ (where 
$p^*\approx 0.7589$ is the special $p$-value identified by H\"orner, Rosenberg, Solan
and Vieille in \cite{HRSV}), the map 
$\alpha$ is \emph{renormalizable}. That is, there are disjoint intervals
$I_1$ and $I_2$ with $I_1$ containing the critical point such that $\alpha(I_1)
\subset I_2$ and $\alpha(I_2)\subset I_1$. Since $\alpha|_{I_2}$ is monotonic, 
we see that the \emph{renormalized map}, $\alpha^2\colon I_1\to I_1$,
is a unimodal map. 
If $\alpha$ is renormalizable, then $I_1\cup I_2$ is an absorbing set. 
Points outside $I_1\cup I_2$ either eventually land in $I_1\cup I_2$ under iteration
or converge to fixed points so that all of the `interesting dynamics' lies in 
$I_1\cup I_2$. When a map is renormalizable, it decreases 
the growth rate of the number of iterated preimages lying in $I_1\cup I_2$:
an element of $I_1$ has at most one preimage in $I_2$ and an element of $I_2$
has at most two preimages in $I_1$, so that for $x\in I_1\cup I_2$,
$|\alpha^{-n}(x)\cap (I_1\cup I_2)|\le 2^{\lceil n/2\rceil}$. 
The renormalization is illustrated for $p=0.73$ in Figure \ref{fig:renorm}.

\begin{figure}
\includegraphics[width=2.5in]{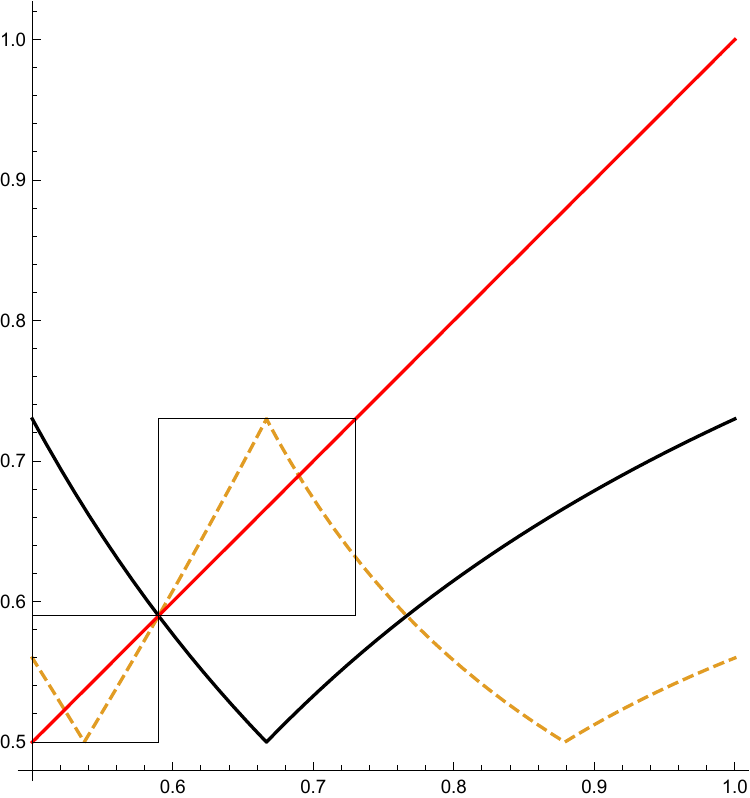}
\caption{The graph of $\alpha(x)$ (in black) for $p=0.73$. The graphs of 
$\alpha^2(x)$ (dashed) and $x$ are superimposed to illustrate the 
renormalization. The squares illustrate the fact that $\alpha^2$ maps 
$I_1$ and $I_2$ to themselves.
}
\label{fig:renorm}
\end{figure}

To see that $\alpha_p$ is renormalizable for $p\in [\frac 23,p^*]$, let
$p_i=\alpha^i(p)$ (so that $p_0=p$). One can check that for $p\in [\frac 23,p^*]$,
\begin{equation}\label{eq:renorm}
\begin{split}
&\tfrac 12<p_3<p_1<p_2<\tfrac 23<p;\\
&\alpha([\tfrac 12,p_1])=[p_2,p]\text{ and }
\alpha([p_2,p])=[\tfrac 12,p_1],
\end{split}
\end{equation}
establishing the (one-time) renormalizability. At the endpoint $p^*$ of the range
of $p$-values that we are considering, 
one has $p_2=p_1$ and hence $p_n=p_1$ for all $n\ge 1$.

It may happen that the renormalized map is itself renormalizable. 
This is the case for $p<0.709637$ and is illustrated in 
Figure \ref{fig:renorm4}. See Devaney's book \cite{Devaney}
for more information about renormalization of unimodal maps
and the relationship between interval maps and symbolic dynamics.

\begin{prop}\label{prop:pressureest}
Let $\alpha$ be a continuous piecewise monotonic map of $I=[\frac 12,1]$
and let $g$ be a continuous on $I$. 
Suppose that $I$ is partitioned into intervals $J_0,\ldots,J_{k-1}$.
Let $\beta_i=\max_{x\in J_i}h(x)$. Let the \emph{multiplicity} 
$m_{ij}=\max_{y\in J_j}\#
\{x\in J_i\colon \alpha(x)=y\}$. Let $A$ be the $k\times k$ matrix with entries
$a_{ij}=\beta_im_{ij}$. Then
$$
P_{\alpha}(\log h)\le \log\rho(A),
$$
where $\rho(A)$ denotes the spectral radius (i.e. maximal eigenvalue) of $A$. 
\end{prop}

\begin{proof}
Notice that there are at most $m_{i_0i_1}m_{i_1i_2}\ldots
m_{i_{n-1}i_n}$ $n$th order preimages $x$ of a point $y$ in $J_{i_n}$ with the property
that $\alpha^t(x)\in J_{i_t}$ for each $0\le t<n$. For each such preimage,
the largest possible contribution to the sum is 
$\beta_{i_0}\ldots\beta_{i_{n-1}}$, so that we see 
$$
\sum_{t\in\alpha_p^{-n}(\frac12)}h^{(n)}(t)
\le \sum_{i}(A^n)_{ij},
$$
where $j$ is the index of the interval containing $\frac 12$.
Taking logarithms and dividing by $n$, 
the result follows.
\end{proof}

For a fixed $p$ and any partition of $[\frac 12,1]$ into intervals, 
one can calculate the matrix $A$ so that this
proposition gives an upper bound for $P_{\alpha_p}(\log h)$. 
Hence in order to establish
that $P_{\alpha_p}(\log h)<0$, it suffices to exhibit \emph{some} finite partition 
such that the corresponding matrix $A$ has spectral radius less than 1.

In fact, when dealing with $\alpha_p$, the interval $[p,1]$ plays no role in the
pressure computation as points in this interval have no preimages. It therefore
suffices to partition the interval $[\frac 12,p]$.
A natural choice of intervals $J_0,\ldots,J_{k-1}$ is
obtained by taking the points $\frac 12$ and $(\alpha^i(p))_{i=0}^{k-1}$
in increasing order as the endpoints of intervals. The reason this choice is a good
one is that the endpoints of each of these intervals (except $\alpha^{k-1}(p)$)
are mapped exactly into each other, so that for most pairs $i$ and $j$, each
point in $J_j$ has \emph{exactly}
$m_{ij}$ preimages in $J_i$, making the estimates reasonably tight.
If $p$ is fixed,
one obtains in this way for each $k$ a $k\times k$ matrix, $A_k(p)$, such that if its 
spectral radius is less than 1, then $P_{\alpha_p}(\log h)<0$ and hence
$\sigma^*$ is an optimal strategy for Ian. This gives a family $(C_k)$ of
sufficient conditions for $\sigma^*$ to be optimal, namely:
\begin{equation}\label{eq:Ck}
\text{If }\rho(A_k(p))<1,\text{ then $\sigma^*$ is an optimal strategy 
for Ian.} \tag{$C_k$}
\end{equation}

\begin{figure}
\includegraphics[width=6cm]{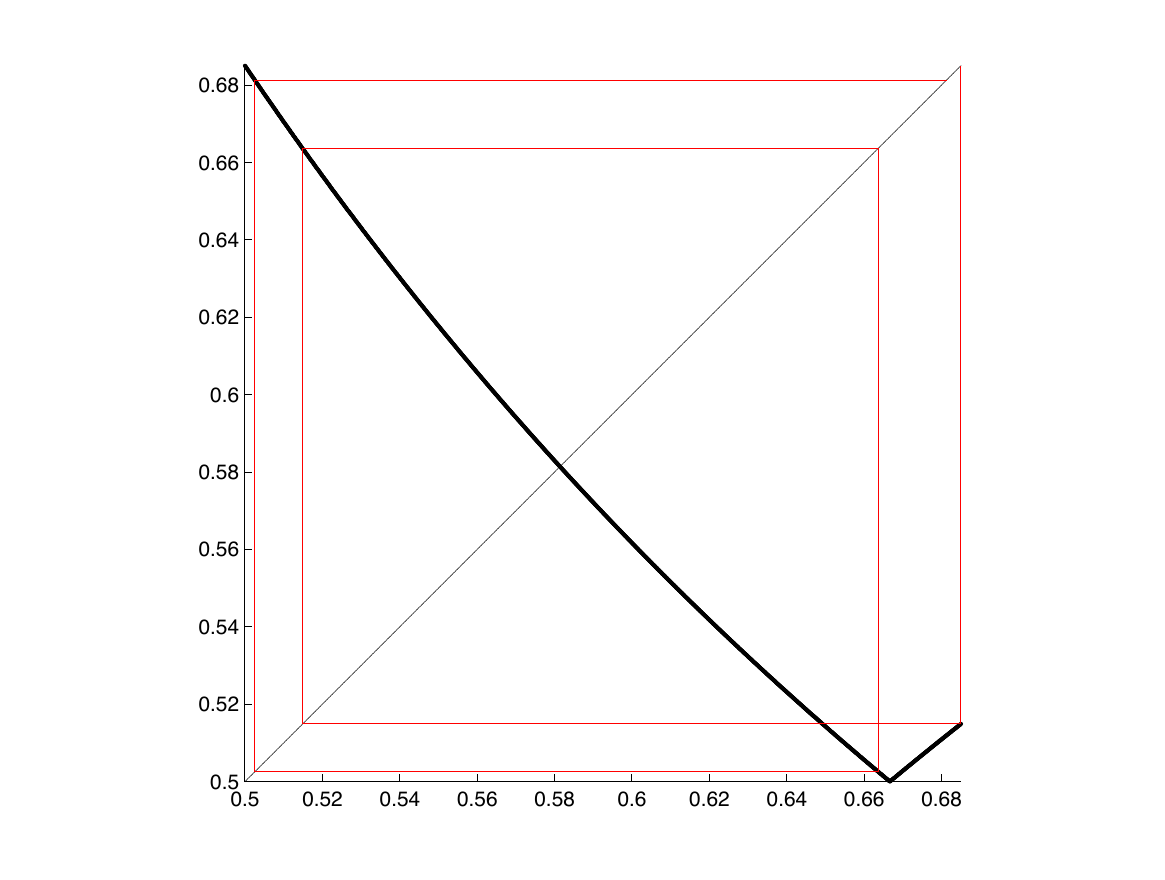}
\includegraphics[width=6cm]{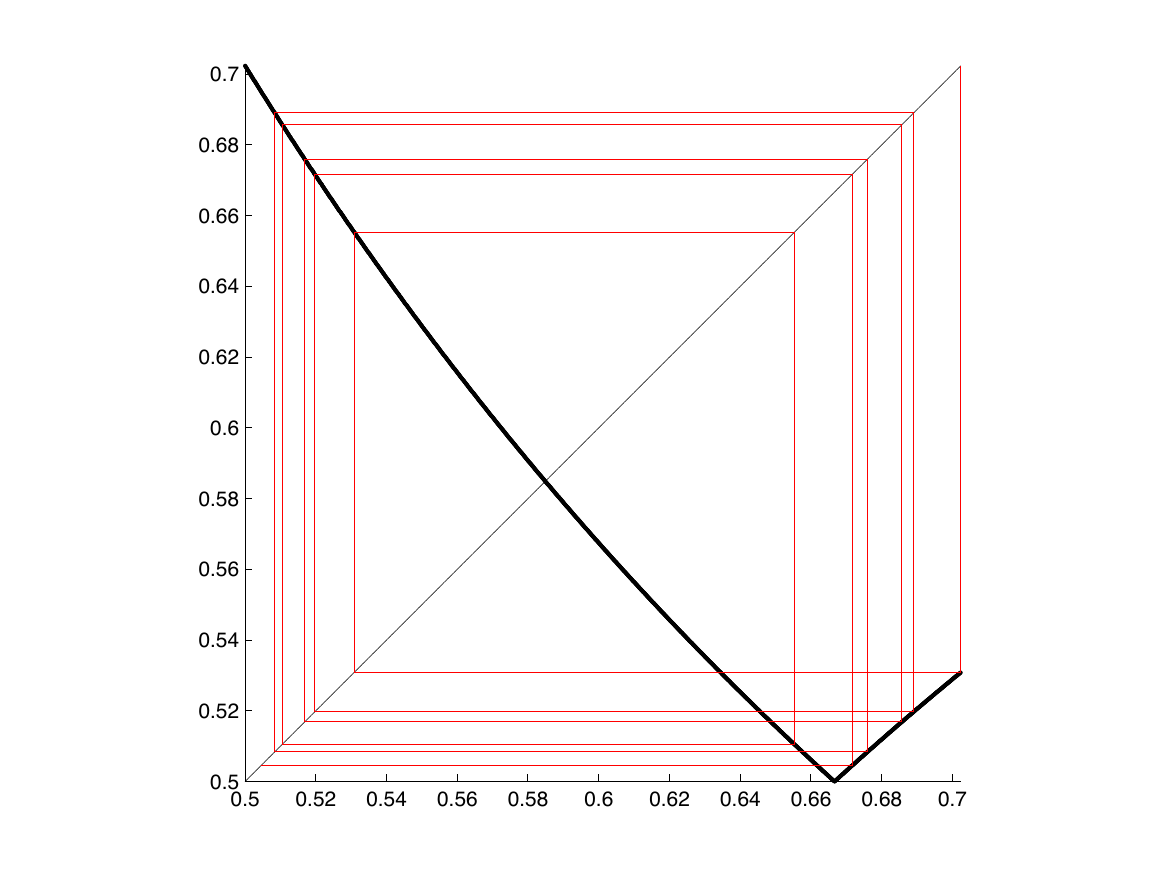}
\includegraphics[width=6cm]{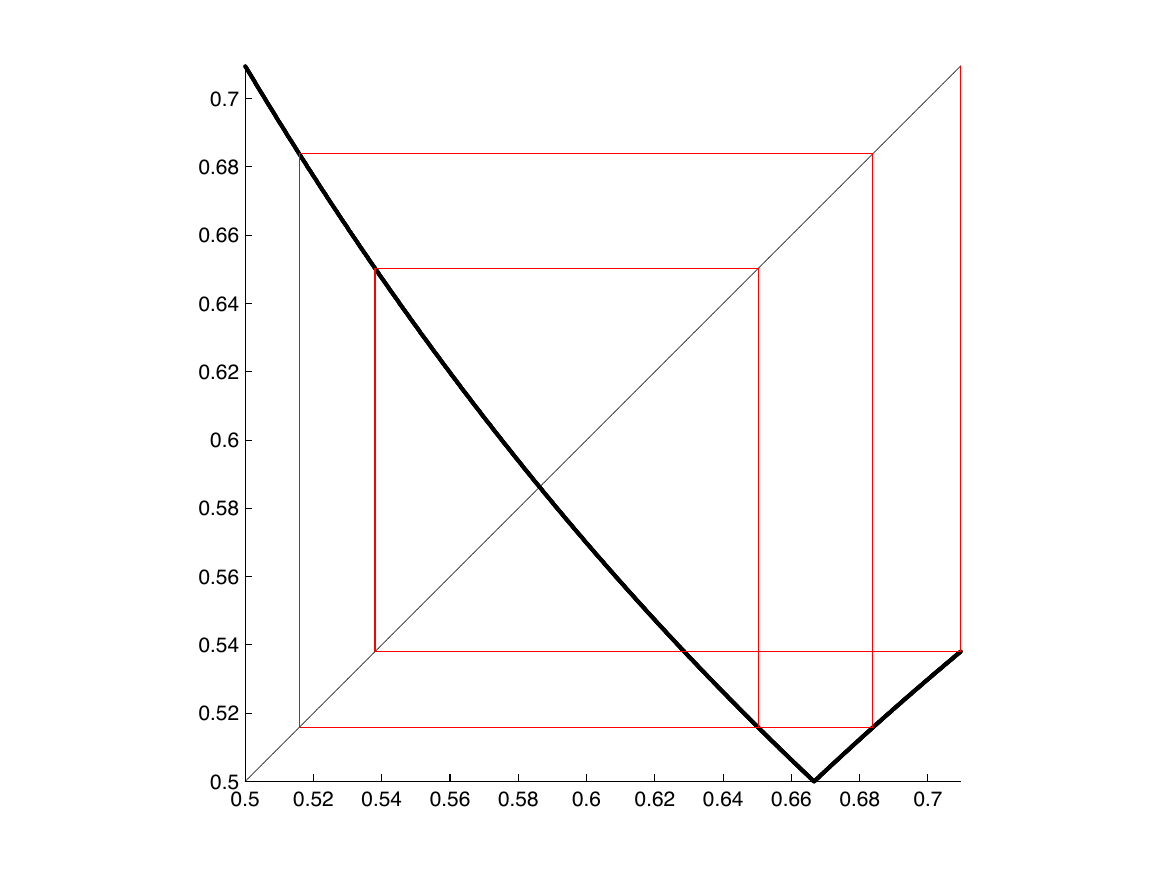}
\includegraphics[width=6cm]{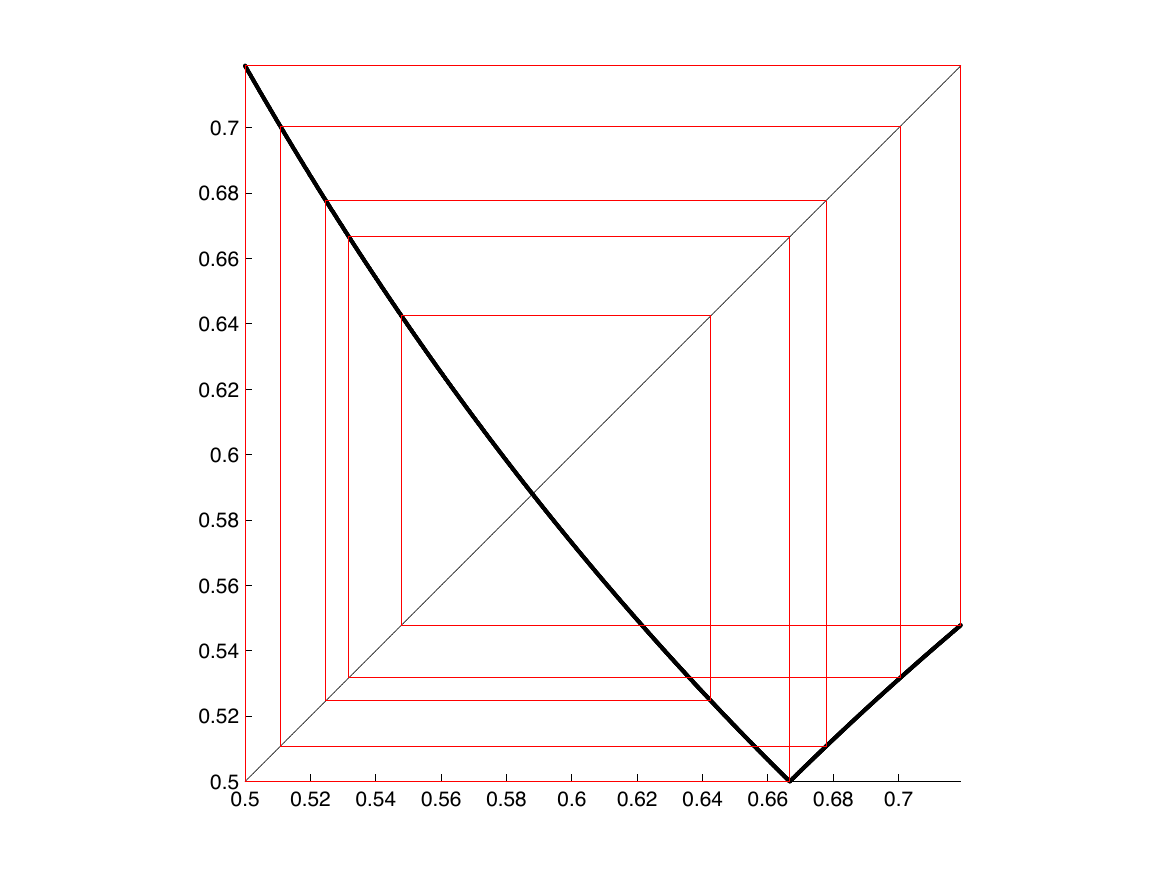}
\caption{The graphs of $\theta \mapsto \alpha(\theta)$ and first points 
of the orbit of $p$, for $p=0.685, p=0.7023\dots, p=0.709\dots$ and 
$p=0.719\dots$. The renormalizablity of $\theta\mapsto\alpha(\theta)$ 
may be seen from the fact that in each of the graphs points to the 
right of the fixed point are mapped to the left of the fixed point 
and vice versa.}
\end{figure}

Proposition \ref{prop:pressureest} and \eqref{eq:Ck} give a way to check that
$P(\log h)<0$ for a single $p$-value. We now obtain estimates on $P(\log h)$
in a range of $p$-values simultaneously.

\subsection{The range (2/3, 0.709636)}

Here, and in the next range, we divide $[\frac 12,p]$ into 9 sub-intervals.
In this range, we check that the following inequalities are satisfied:

$$
\tfrac 12<p_7<p_3<p_5<p_9<p_1<p_2<\tfrac 23<p_6<p_4<p_8<p.
$$

We divide the interval $[\frac 12,p]$ into subintervals $J_0,\ldots,J_8$
as follows: $J_0=[\tfrac 12,p_7]$; $J_1=[p_7,p_3]$; $J_2=[p_3,p_5]$; $J_3=[p_5,p_1]$;
$J_4=[p_1,p_2]$; $J_5=[p_2,p_6]$; $J_6=[p_6,p_4]$; $J_7=[p_4,p_8]$; and $J_8=[p_8,p]$.

The transitions between the intervals are shown in Figure \ref{fig:twoa}.

\begin{figure}
\includegraphics[width=3in]{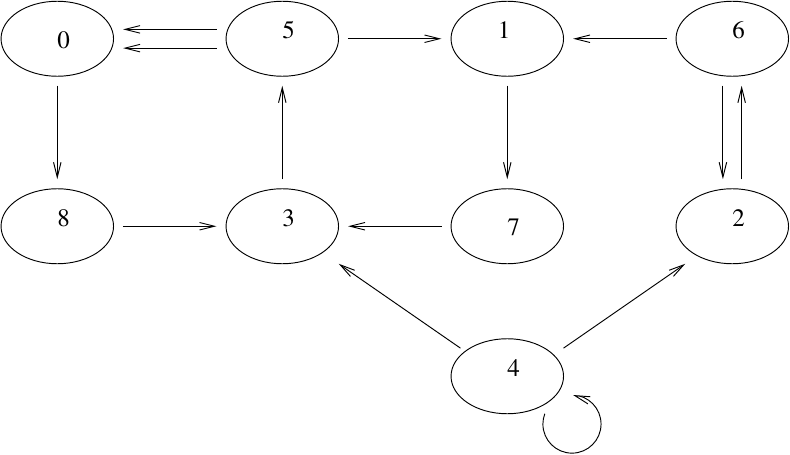}
\caption{Full 9 interval transition diagram for $\frac 23 < p < 0.709637$. 
The double arrow signifies that $m_{50}=2$.}\label{fig:twoa}
\end{figure}

There are three connected components, one (the interval $J_4$ by itself)
with radius $\gamma/p_1$,
one (the intervals $J_2$ and $J_6$) 
with radius $\gamma/\sqrt{p_3p_6}$. Both of these
are less than 1 since $\gamma<\frac 12$. 
The third component is illustrated in Figure \ref{fig:twob}
and consists of two loops of period 4 sharing a common edge.
The spectral radius of this component is the fourth root of the 
sum of the product of the multipliers around the two loops.
That is, the spectral radius of this component is given by
$$
\gamma\left(\frac{1}{p_5p_2}\left(\frac{2}{\frac12p_8}+
\frac{1}{p_7p_4}\right)\right)^{1/4}.
$$

This quantity is less than 1 in the given range. 
\begin{figure}
\includegraphics[width=2.5in]{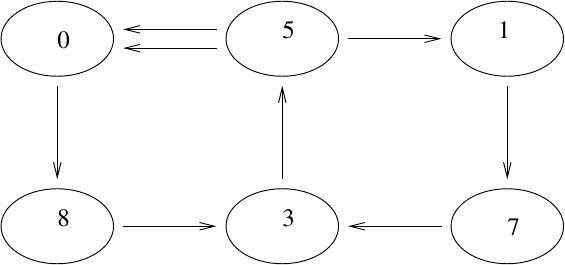}
\caption{Principal component for $\frac 23 < p < 0.709637$}\label{fig:twob}
\end{figure}

Notice that the principal component has period 4 because the original map
is twice renormalizable.

\begin{figure}
\includegraphics[width=2.5in]{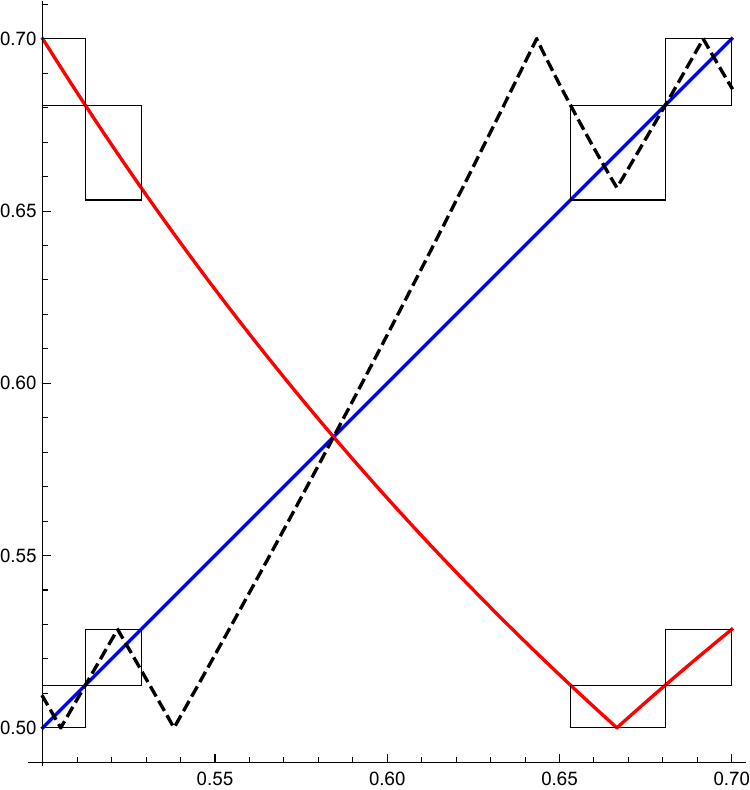}
\caption{Graphs of $\alpha$ (top left to bottom right) and $\alpha^4$ (dashed) for 
$p=0.7$. The map is twice renormalizable, so that there are intervals
$I_1$, $I_2$, $I_3$ and $I_4$ each mapped by $\alpha$ to the next
with $I_1$ containing the critical point. In particular, 
$\alpha^4$ maps each interval to itself. This is illustrated
by the boxes.}\label{fig:renorm4}
\end{figure}

\subsection{The range [0.709637,0.719023]}

In this parameter range, the map is only once renormalizable. 
At 0.709636979, there is a coincidence $p_3=p_5$ (so that all
odd iterates beyond the third coincide; all even iterates beyond the fourth 
coincide). 

The right end point of the interval, 0.7190233023, occurs when $p_9$ hits 
$\frac12$. On the parameter interval $[0.709636979,0.7190233023]$, the
functions $p\mapsto p_i$ are monotone for each $1\le i\le 9$. 
The graphs of the functions do not cross.

In this range, we have
$\frac12<p_9<p_5<p_3<p_7<p_1<p_2<\frac 23<p_8<p_4<p_6<p$.

Again, we use these points (excluding $p_9$ and $\frac 23$) to define a collection of
intervals: $J_0=[\frac12,p_5]$, $J_1=[p_5,p_3]$, 
$J_2=[p_3,p_7]$, $J_3=[p_7,p_1]$, $J_4=[p_1,p_2]$, 
$J_5=[p_2,p_8]$, $J_6=[p_8,p_4]$, $J_7=[p_4,p_6]$ and
$J_8=[p_6,p]$. The transitions are $0\to 8$; $1\to 7$; $2\to 6$; $3\to 5$;
$4\to 2,3,4$; $5\to 0,0,1$; $6\to 0$; $7\to 1,2$; and $8\to 3$ (where 
repeated transitions correspond to values of $m$ that exceed 1). 

This is illustrated in Figure \ref{fig:three}.

\begin{figure}
\includegraphics[width=4in]{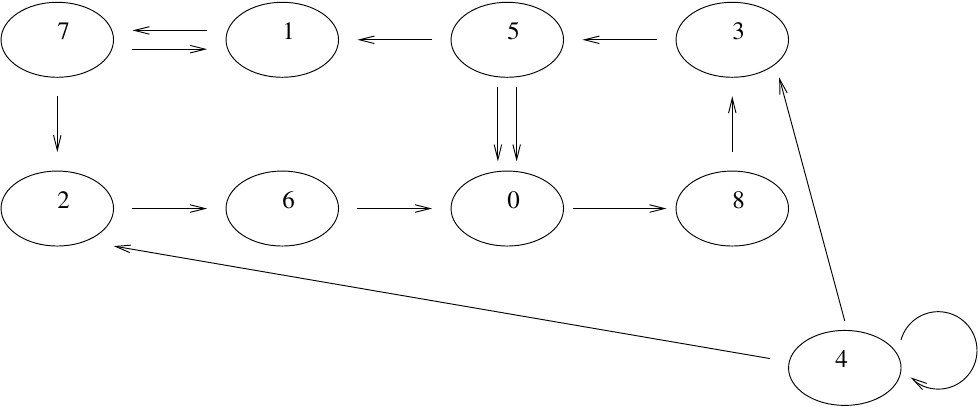}
\caption{The transitions in the range $0.709637 < p < 0.719023$.}
\label{fig:three}
\end{figure}

The single component consisting of $J_4$ always has multiplier less than 1. 
The transition matrix of the principal component is given by

$$
\gamma\begin{pmatrix}
0&0&0&0&0&0&0&2\\
0&0&0&0&0&0&{q_5}&0\\
0&0&0&0&0&{q_3}&0&0\\
0&0&0&0&{q_7}&0&0&0\\
{2}{q_2}&{q_2}&0&0&0&0&0&0\\
{q_8}&0&0&0&0&0&0&0\\
0&{q_4}&{q_4}&0&0&0&0&0\\
0&0&0&{q_6}&0&0&0&0
\end{pmatrix}
$$
where $q_i=1/p_i$.

We check that $q_4$, $q_5$ and $q_8$ are increasing in the parameter range, 
while $q_3$, $q_7$ and $q_6$ are decreasing. 
Substituting the maximum values of each of these quantities in the range
and also using the maximal value of $\gamma$, we obtain a matrix whose 
spectral radius is 0.9773, giving the required bound on the pressure in this range. 

In principle it should be possible to extend by smaller and smaller intervals as long as
the pressure remains negative. For example, the test $(C_{230})$ 
described above shows that the pressure is negative for $p=0.7321$. 
Indeed applying a similar procedure to 10000 randomly chosen $p$-values in the range 
$[0.719,0.732]$ using $(C_k)$ with $k=50,100,150,\ldots,500$ 
shows that $P_{\alpha_p}(\log h)<0$ for each of them.

At this stage, we have proved that the
strategy $\sigma^*$ for Ian and the strategy $\tau$ for Una constructed in
Proposition \ref{prop:suffcond1}
are optimal if $\frac23 \leq p \leq 0.719023$

We define $p_c$ to be the supremum of the set of $t$ such that for each
$p$ satisfying $\frac 12\le p\le t$, $\sigma^*$ is an optimal strategy for Ian.
Combining our results with those of \cite{HRSV}, we have shown 
$p_c\ge 0.719023$. Computer evidence suggests
$0.7321\le p_c\le 0.7322$. We provide an upper bound showing $p_c\le 0.73275300915$
in the next section. We conjecture, based on limited computer experimentation,
that for almost all $p\ge p_c$, $\sigma^*$ is not optimal for Ian.

\section{Beating $\sigma^*$ after the critical point}
\label{sec:better}
For $p$ beyond 0.7322, we suspect that the strategy $\sigma^*$ is often not optimal,
especially when the orbit of $1-p$ comes close to $\frac12$. Indeed, we propose  
strategies --- far from optimal --- which do better than $\sigma^*$ for specific 
values of $p$ ; we prove this claim completely for $\frac34$ (which was an 
explicit open question); we also show the computation for the value 
$p=0.73275300915$. 

Let $p$ be large enough so that we can expect $\sigma^*$ not to be optimal. 
We choose $k_0$ so that $\tilde{\theta} = \alpha^{k_0}(p)$ is  
close to $\frac12$. We also let  $\epsilon>0$ be a small real number. 

We modify slightly $\sigma^*$ to a strategy $\sigma_{k_0,\epsilon}$ in the 
following way: if $\theta \neq \tilde{\theta},1-\tilde{\theta}$,  then Ian
plays following  $\sigma^*$. But if 
$\theta = \tilde{\theta}$ (recall that $\tilde\theta>\frac12$), then Ian  
``perturbs" his reaction by $\epsilon$: he plays $1$ with probability: 
\begin{itemize}
\item $(1-\epsilon)\frac{1-\tilde{\theta}}{\tilde{\theta}}$ if $s=S_1,$
\item $\epsilon$ if $s = S_0$;
\end{itemize}

Meanwhile if $\theta=1-\tilde\theta$, Ian plays 1 with probability
\begin{itemize}
\item $1-\epsilon$ if $s=S_1,$
\item $1- (1-\epsilon) \frac{\tilde{\theta}}{1-\tilde{\theta}} = 
\frac{1-(2-\epsilon)\tilde{\theta}}{1-\tilde{\theta}}$ if $s = S_0,$
\end{itemize}
In the case $\theta=\tilde{\theta}$, the belief is updated as:
\begin{itemize}
\item if Ian plays 1, it becomes : $a_0:= p-\epsilon\gamma$; 
\item if Ian plays 0, it becomes $1-b_0$, where  
$b_0$ is defined to be $2-3p+\frac{\gamma}{\tilde\theta}-\epsilon\gamma
\frac{1-\tilde\theta}{\tilde\theta}$.
\end{itemize}

If $\theta=1-\tilde\theta$, the updates are
\begin{itemize}
\item if Ian plays $1$, it becomes $b_0$
\item if Ian plays $0$, it becomes : $1-a_0$.
\end{itemize}

Notice that $a_0$ is a perturbation of $p$ and $b_0$ is a perturbation 
of $\Phi(\tilde\theta)$. 
The critical aspect in this choice of perturbation of the strategy is that it 
remains U-indifferent: If Una's belief that the system is in state $S_1$ is
$\tilde\theta$, then given the information available to Una, the probability
that the state is $S_0$ and Ian plays 0; and the probability
that the state is $S_1$ and Ian plays 1 are both $(1-\epsilon)(1-\tilde\theta)$.
Similarly if Una's belief is $1-\tilde\theta$, the probabilities
are both $(1-\epsilon)\tilde\theta$. 

It is also greedy except when the belief is $\tilde\theta$ or
$1-\tilde\theta$, in which case the one-step expected gain is 
$(1-\epsilon)\min(\tilde\theta,1-\tilde\theta)$. 
As for $\sigma^*$, Una's belief that the
system is in state $S_1$ evolves as a Markov chain. Since 
Ian's actions do not depend on Una's, one may write down the 
transition probabilities from one state to the next and compute
the expected one-step gain from each state (irrespective of Una's 
choice of move due to the U-irrelevance of the strategy). Hence
is is not hard to obtain an expression for the expected gain of
the perturbed strategy. 

We shall compare the value of  this strategy $\sigma_{k_0,\epsilon}$ 
with the value of $\sigma^*$. We are going to prove 
\begin{lem}
\label{lem:confforbetter}
$v_p(\sigma_{k_0,\epsilon}) > v_p(\sigma^*)$ if and only if 
\begin{equation}\label{eq:desired}
\tilde{\theta} \big( W(\alpha(\tilde{\theta}))  
-  W(b_0)  \big)  
>      (1-\tilde{\theta}) \big(W(a_0)   
-     (1-\epsilon) W(p)  \big), 
\end{equation}
where $\tilde\theta=\alpha^{k_0}(p)$ and $W(\theta) := 
\sum_{n=0}^\infty \prod_{k=0}^{n-1} \alpha^k(\theta)$.
\end{lem} 
In Section \ref{sec:p=3/4}, 
we shall apply this lemma to the case $p=\frac34$ suggested as a 
test case in \cite{HRSV}.

Observe that with the strategy $\sigma_{k_0,\epsilon}$, when 
$\theta=\tilde{\theta}$, the one-step expected 
payoff is a bit smaller than with the strategy $\sigma^*$. However, 
the update of the belief is slightly 
different and one may hope that this new belief puts Ian in a better 
position for the future: in a sufficiently improved position to compensate for the 
loss in the one-step expected payoff. The objective is to show that this 
is possible for some values of $p$.  Note that we make no assertion 
about optimality of the perturbed strategy, but rather show
that irrespective of Una's strategy, the expected gain is larger than
that obtained by playing $\sigma^*$.

For this purpose, we have to find  an expression for the long-term 
expected payoff. Whatever Una plays, the evolution is a Markov chain 
on the beliefs (governed by the random changes of the state and 
the values of his choices). The belief may 
take the values $p$ and $1-p$ and values in the $k_0$ first terms 
of the  orbits of $p$ and $1-p$;
when it reaches $\tilde{\theta}$, it may jump to the values of the belief after 
$\tilde{\theta}$; namely $a_0$ or $1-b_0$ 
and then continue on their orbits for some random time and then go back to 
$1-p$ or $p$. It is convenient to further  assume that neither $\tilde{\theta}$ 
nor $1-\tilde{\theta}$ belong to the orbits of $a_0$ and 
$b_0$ (this is true for all but countably many values of $\epsilon$).
We observe that the symmetry $\theta \mapsto 1-\theta$ does not
affect either the transitions or the payoff so it suffices to follow 
the orbits modulo the symmetry about $\frac12$. 

\begin{figure}
\includegraphics[width=12cm]{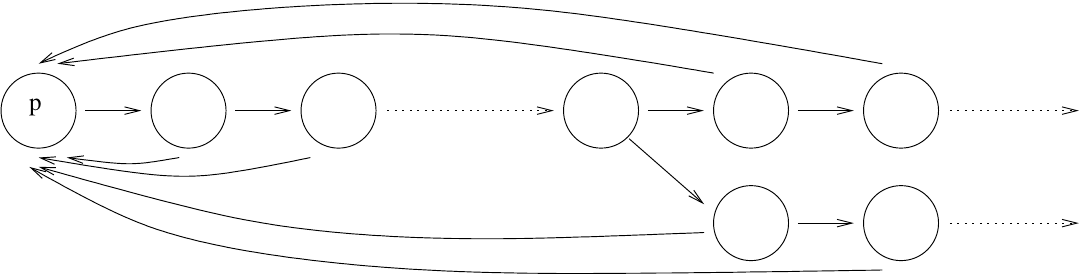}
\caption{Schematic depiction of the (symmetrized) Markov chain. 
At each state other than 
$\tilde\theta$, one choice leads back to the base, and the other goes to the 
right. }\label{graph}
\end{figure}

\subsection{Invariant measure for the Markov Chain}
\begin{proof}[Proof of Lemma \ref{lem:confforbetter}]
Recall $\alpha(\theta):=\max(\Phi(\theta),1-\Phi(\theta))$. 
For $0\le k\le k_0$, let $\Theta_k=\alpha^k(p)$, so that 
$\Theta_{k_0}=\tilde\theta$. Set $a_k=\alpha^k(a_0)$ and $b_k=\alpha^k(b_0)$

We see that Una's belief evolves as a Markov chain on the  countable state space 
$\{ \Theta_k, 0 \leq k \leq k_0 ; a_k, b_k, k \geq 0\}$ 
with transition probabilities: 
\begin{itemize}
\item If $k<k_0$, $\Theta_k \to \Theta_{k+1}$  with probability $\Theta_k$ and 
$\Theta_k \to \Theta_0$ with probability $1-\Theta_k$. 
\item If $k=k_0$, $\Theta_{k_0} \to a_0$ with probability $1-\tilde{\theta}$ 
and $\Theta_{k_0} \to b_0$ with probability $\tilde{\theta}$. 
\item 
For all $k \geq 0$, $a_k \to a_{k+1}$  with 
probability $a_k$ and $a_k \to \Theta_0$ with probability $1-a_k$;
similarly $b_k\to b_{k+1}$ with probability $b_k$ and
$b_k$ to $\Theta_0$ with probability $1-b_k$. 
\end{itemize}

It is straightforward to compute the invariant measure for this chain. We denote 
by $\Pi^\epsilon_s$ the probability of being in $s$ in the perturbed chain
and by $\Pi^0_s$ the probability in the unperturbed chain.

\noindent
For $1 \leq n \leq k_0$
$$
\Pi^\epsilon_{\Theta_n} = \Pi^\epsilon_{\Theta_0} \prod_{k=0}^{n-1} \Theta_k.  
$$
For all $n \geq 0$ 
$$
\Pi^\epsilon_{a_n}=   \Pi^\epsilon_{\Theta_{k_0}} (1- \tilde{\theta})   
\prod_{k=0}^{n-1}a_k \text{\quad and\quad }
\Pi^\epsilon_{b_n} =   \Pi_{\Theta_{k_0}} \tilde{\theta}\prod_{k=0}^{n-1}b_k.    
$$
The Chapman-Kolmogorov equations for $\Pi^\epsilon_{\Theta^0}$ give
\begin{equation}
\label{derniere}
\Pi^\epsilon_{\Theta_0} = \sum_{n=0}^{k_0-1}(1-\Theta_n) 
\Pi^\epsilon_{\Theta_n} +  
\sum_{n=0}^{\infty} (1- a_n) \Pi^\epsilon_{a_n} + 
\sum_{n=0}^{\infty}  (1-b_n) \Pi^\epsilon_{b_n}. 
\end{equation}
Since it is  a probability measure, it also must satisfy : 
\begin{equation}
\label{proba}
\sum_{n=0}^{k_0}  \Pi^\epsilon_{\Theta_n} + \sum_{n=0}^{\infty} \Pi^\epsilon_{a_n} + 
\sum_{n=0}^{\infty}  \Pi^\epsilon_{b_n} = 1. 
\end{equation}
We introduce notation $Q=\prod_{k=0}^{k_0-1}\Theta_k=
\Pi^\epsilon_{\Theta_{k_0}}/\Pi^\epsilon_{\Theta_0}$,
$A=\sum_{n=0}^{k_0}\prod_{j=0}^{n-1}\Theta_j$ and 
$W(\theta)=\sum_{n=0}^\infty \prod_{k=0}^{n-1}\alpha^k(\theta)$.
This latter quantity gives the ratio of the sum of the
weights in the sub-tree rooted at $\theta$ to the weight of $\theta$.
Using this notation, we can write equality \eqref{proba} as
$$
\Pi^\epsilon_{\Theta_0}\left(A + Q
(1- \tilde{\theta}) W(a_0) + Q \tilde{\theta} W(b_0)\right)  = 1.
$$
Hence
$$
\Pi^\epsilon_{\Theta_0} = \left[   A   +  
Q\big((1-\tilde\theta)W(a_0)+\tilde\theta W(b_0)\big)
\right]^{-1}.
$$
Similarly, $\Pi_{\Theta_0}^0=\left[ A+QW(\alpha(\tilde\theta)) \right]^{-1}.$

\subsection{Expected payoff}
The expected payoff can be written as the sum of the expected payoff (given the state)
weighted by the probability of the state; namely, 
$$
v_p(\sigma_{k_0,\epsilon}) = \sum_{n=0}^{k_0-1}  (1- \Theta_n) \Pi^\epsilon_{\Theta_n} + 
(1-\epsilon)(1-\tilde{\theta}) \Pi^\epsilon_{\Theta_{k_0}} +  
\sum_{n=0}^{\infty} (1-a_n) \Pi^\epsilon_{a_n} +  
\sum_{n=0}^{\infty} (1-b_n) \Pi^\epsilon_{b_n}.   
$$
Using \eqref{derniere}, we obtain
$$
v_p(\sigma_{k_0,\epsilon})  =  \Pi^\epsilon_0 +  
(1-\epsilon)(1- \tilde{\theta}) \Pi^\epsilon_{k_0} = 
\Pi^\epsilon_0\big( 1 + (1-\epsilon) (1-\tilde{\theta}) Q\big). 
$$
We want to show that for well chosen $p$, $k_0$ and $\epsilon$, 
$v_p(\sigma_{k_0,\epsilon})  > v_p(\sigma^*)$.  
We recall that $v_p(\sigma^*) = \Pi^0_{\Theta_0}=1/W(p)$. 

We now have
\begin{align*}
&
v_p(\sigma_{k_0,\epsilon})-v_p(\sigma^*)=
\Pi^\epsilon_{\Theta_0}(1+(1-\epsilon)(1-\tilde\theta)Q)-\Pi^0_{\Theta_0}\\
&=\Pi^0_{\Theta_0}\Pi^\epsilon_{\theta_0}
\left(W(p)(1+(1-\epsilon)(1-\tilde\theta)Q)-
( A+Q((1-\tilde\theta)W(a_0)+\tilde\theta W(b_0)))\right)
\end{align*}
Since $W(p)=A+Q\tilde\theta W(\alpha(\tilde\theta))$, we obtain
\begin{equation*}
\begin{split}
&
v_p(\sigma_{k_0,\epsilon})-v_p(\sigma^*)
\\
&=Q\Pi^0_{\Theta_0}\Pi^\epsilon_{\Theta_0}
\left(\tilde\theta (W(\alpha(\tilde\theta))-W(b_0))-
(1-\tilde\theta)(W(a_0)-(1-\epsilon)W(p))\right),
\end{split}
\end{equation*}
completing the proof of Lemma \ref{lem:confforbetter}.
\end{proof}

\subsection{The case  $p$=3/4}\label{sec:p=3/4}
When $p$ takes the value $\frac 34$, 
the symbolic dynamics of $p$ starts with $11010101$ and
$\alpha^7(p) = 1137/2244 \approx 0.5165...$. We shall set $k_0=7$ and 
$\tilde{\theta} =  1137/2244$.  

Next we estimate $W(\theta)$ for the relevant values of $\theta$. 
First we do it for $p$ and for $\alpha(\tilde{\theta})$. 
Recall that $W(\theta) = \sum_{n=0}^{\infty} \prod_{k=0}^{n-1} \alpha^k(\theta)$.
The general term is positive. As soon as $k \geq 1$, $\frac{1}{2} \leq \Theta_k \leq p$.
Hence, the remainder of the sequence is bounded by 
\begin{align*}
\sum_{n \geq N}  \prod_{k=0}^{n-1} \alpha^k(\theta) 
&\leq  \left(\prod_{k=0}^{N-1} \alpha^k(\theta)\right)  \sum_{n \geq 0} p^{n} \\
&\leq \frac{  \left(\prod_{k=0}^{N-1} \alpha^k(\theta)\right) }{1-p}  
\leq \frac{p^N}{1-p} \leq 4\left(\tfrac{3}{4} \right)^{N}.
\end{align*}

We do the computation with $N=50$, so the bound  on the error is smaller than 
$10^{-10}$ (and the obvious bound $p^{-N}$ is itself of order $10^{-7}$).  
We obtain with this approximation $W(p) \approx 2.8354$ and 
$W(\alpha(\tilde{\theta})) \approx  2.7432$. 

Then numerical experimentation (see Figure \ref{fig:epsilon})
suggests taking $\epsilon = 0.01$.
\begin{figure}
\includegraphics[width=10cm]{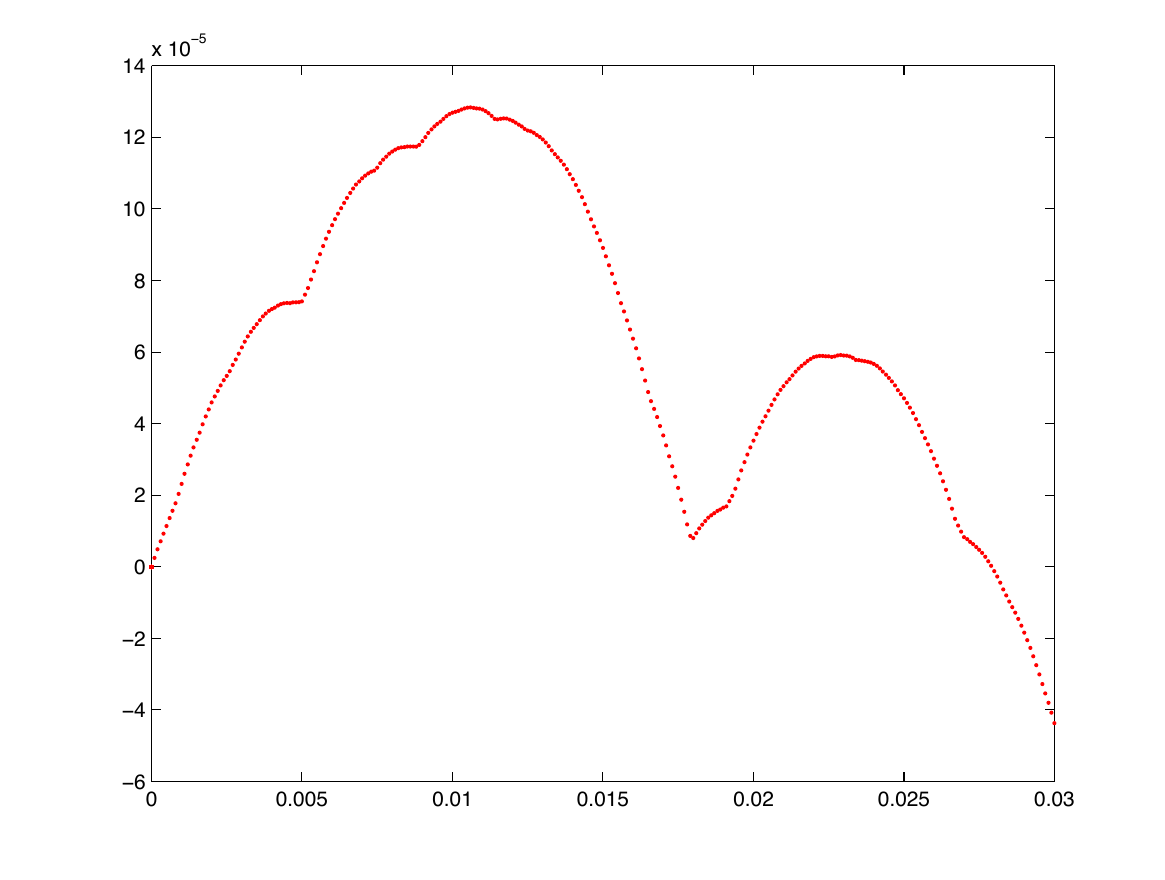}
\caption{Numerical approximation of the graph of the left side of 
\eqref{eq:deltagain} (vertical axis) in the case $p=\frac 34$
as a function of $\epsilon$ (horizontal axis).} \label{fig:epsilon}
\end{figure}
For this value of $\epsilon$, we also compute $W(b_0) 
\approx 2.7305$ and $W(a_0) \approx   2.8203$. 
This is  sharp enough to see the difference between  
$$
(1-\tilde{\theta}) \big(W(a_0) - (1-\epsilon) W(p) \big)
\approx     0.0064
$$ 
and 
$$ 
\tilde{\theta}  \big( W(\alpha(\tilde{\theta}) - W(b_0) )\big)\approx 0.0065. 
$$
We conclude that 
\begin{equation}\label{eq:deltagain}
\tilde{\theta} \big( W(\alpha(\tilde{\theta}))  
-  W(b_0)  \big)  
-      (1-\tilde{\theta}) \big(W(a_0)-(1-\epsilon) W(p))  \big) >    10^{-4}, 
\end{equation}
so that, by Lemma~\ref{lem:confforbetter}, we have shown 
that $\sigma^*$ is not optimal for $p=\tfrac 34$. The expected 
payoff of the alternative strategy can be computed: 
we obtain $v_{\frac{3}{4}}(\sigma^*)=0.35267910...$ and 
$v_{\frac{3}{4}}(\sigma_{7,0.01}) = 0.35267964...$, showing  a difference 
between the values  of 
$$
v_{\frac{3}{4}}(\sigma_{7,0.01}) -   v_{\frac{3}{4}}(\sigma^*) \approx  5\times 10^{-7}.
$$
\subsection{The case  $p$=0.73275300915}
By trial and error, we located a value of $p$ slightly above the
conjectured critical point $p_c\approx 0.7321$ for which
 $\sigma^*$ is not optimal. 
Computations (using the Mathematica package with 200 digit accuracy)
with $p=0.73275300915$, $k_0 = 57$ and $\epsilon=0.0002$ 
show that $\tilde\theta\approx 0.50000194899$, 
$v_p(\sigma_{57,0.0002})\approx 0.361469540454503987436\mathbf{65121}$
and $v_p(\sigma^*)\approx 0.361469540454503987436\mathbf{10381}$, so that the
gain of the perturbed strategy is larger by approximately $5.47\times 10^{-22}$.
This concludes the proof of Theorem~\ref{th:result}. 

\section{Overview}

We hope that ideas from this paper may find wider application in the theory
of repeated games. We identify a couple of factors that play important roles
in our analysis:

\begin{description}
\item[Renewal]
The directed graph describing the evolution of Una's beliefs has a very
simple structure (see Figure \ref{fig:infstate}). Any time that Ian's move is
aligned with Una's belief, her belief returns to the base of the tower.
This renewal structure vastly simplifies computations.
\item[Complexity and Contraction]
Our construction of Una's best response to $\sigma^*$ was based on solving
a system of linear equations \eqref{eq:GHeqs}
relating the values of $V$ before and after Ian's move. 
The contraction properties of the matrices guaranteed the existence of a fixed
point of $\Lax$. Our method depended also on getting detailed information about the 
fixed point. The discontinuity of $\Phi$ at $\frac 12$ led to discontinuities
of $V$ at $\frac 12$. These are propagated by
\eqref{eq:GHeqs} to preimages of $\frac 12$ under $\Phi$. A key role was played in the 
argument by the fact that the jumps at the discontinuity points were
all of the same sign and summable (the summability ensured that
the fixed point was of pure jump type and the sign condition ensured that the fixed
point was monotonic). That the sign was constant appears to be
a fortunate accident. The summability can be traced to the complexity of $\Phi$. 
When $p<\frac 23$, there are no preimages of $\frac 12$. As $p$ increases,
the complexity of the system (the topological entropy) increases. This quantity
measures the exponential growth rate of the number of preimages. The pressure
measures a combination of the number of preimages with the size of the discontinuity
at each. 
\end{description}

We thank the two referees for an extremely careful and constructive reading
of the paper, as well as for the numerous suggestions for improvements.

\bibliographystyle{abbrv}
\bibliography{games}

\end{document}